\documentclass{amsart}
\usepackage{comment}

\usepackage[utf8]{inputenc}
\usepackage[all]{xy}
\usepackage{xspace}

\usepackage{amsmath,amssymb}

\usepackage{tikz-cd}
\usepackage{mathrsfs}
\usepackage{adjustbox}
\usepackage{hyperref}

\theoremstyle{plain}
\newtheorem{theorem}{Theorem}[section]
\newtheorem{proposition}[theorem]{Proposition}
\newtheorem{lemma}[theorem]{Lemma}
\newtheorem{corollary}[theorem]{Corollary}

\theoremstyle{definition}
\newtheorem{definition}[theorem]{Definition}

\theoremstyle{remark}
\newtheorem{remark}[theorem]{Remark}
\newtheorem{example}[theorem]{Example}

\newcommand{\rlc}{r-locally compact\xspace}

\newcommand{\cosupp}{\mathrm{cosupp}}
\newcommand{\op}{\mathrm{op}}
\newcommand{\hugeburnside}{\mathscr{B}_b}%{\mathscr{B}_h}
\newcommand{\smallburnside}{\mathscr{B}}
\newcommand{\bigburnside}{\mathscr{B}_{lf}}%{\mathscr{B}_b}
\newcommand{\lfburnside}{\mathscr{B}_{bf}}%{\mathscr{B}_{lf}}
\newcommand{\gburnside}[1]{\mathscr{B}_{#1}}
\newcommand{\bfinMod}[1]{{#1}\text{-Mod}_{fb}}
\newcommand{\bMod}[1]{{#1}\text{-Mod}_{b}}
\newcommand{\bCh}[1]{\text{Ch}_{b}(#1)}
\newcommand{\cube}[1]{\underline{2}^{#1}}
\newcommand{\cubeop}[1]{\left(\underline{2}^{#1}\right)^{\op}}
\newcommand{\cubeplus}[1]{\underline{2}^{#1}_+}
\newcommand{\cero}{\vec{0}}
\newcommand{\uno}{\vec{1}}
\newcommand{\CWSpectra}{\text{CW-Sp}}
\newcommand{\Mod}[1]{#1\text{-Mod}}
\newcommand{\Chain}[1]{\text{Ch}(#1)}
\def\CW{CW}
\def\GCW{$G$-\CW}
\newcommand{\groupCW}[1]{$#1$-\CW}

\DeclareMathOperator{\CKhannular}{CKh_{\mathbb{A}_\mathfrak{q}}}
\DeclareMathOperator{\CKhannularr}{CKh_{\mathbb{A}_\mathfrak{q}}^{r}}

\DeclareMathOperator{\colim}{colim}
\DeclareMathOperator{\dual}{D}
\DeclareMathOperator{\fdual}{D_{lf}}
\DeclareMathOperator{\linear}{\cA}

\DeclareMathOperator{\Hom}{Hom}
\DeclareMathOperator{\Map}{Map}

\DeclareMathOperator{\Tot}{Tot}

\DeclareMathOperator{\Ch}{Ch}
\DeclareMathOperator{\Top}{Top}
\newcommand{\Topp}{\mathrm{Top}_*}

\DeclareMathOperator{\hocolim}{hocolim}

\DeclareMathOperator{\ob}{ob}

\DeclareMathOperator{\Id}{Id}

\definecolor{amaranth}{rgb}{0.9, 0.17, 0.31} %dark red
\definecolor{carrotorange}{rgb}{0.80, 0.5, 0.01} %orange
\definecolor{citrine}{rgb}{0.89, 0.82, 0.04} %dark yellow
\definecolor{dartmouthgreen}{rgb}{0.05, 0.5, 0.06} %green
\definecolor{ballblue}{rgb}{0.13, 0.67, 0.8} %blue
\definecolor{ceruleanblue}{rgb}{0.16, 0.32, 0.75} %deeper blue
\definecolor{amethyst}{rgb}{0.6, 0.4, 0.8} %purple
\definecolor{amber}{rgb}{1.0, 0.75, 0.0} %amber
\definecolor{burlywood}{rgb}{0.87, 0.72, 0.53} %beigebrown
%authornotes

\newcommand\lra{\longrightarrow}

\newcommand\la{\leftarrow}

\newcommand{\bE}{\mathbb{E}}

\newcommand{\bZ}{\mathbb{Z}}

\newcommand{\cA}{\mathcal{A}}

\newcommand{\cC}{\mathcal{C}}
\newcommand{\cD}{\mathcal{D}}

\newcommand{\cF}{\mathcal{F}}

\newcommand{\id}{\mathrm{id}}

\title{Quantum annular homology and bigger Burnside categories}
%\author{Federico Cantero Mor\'an, Sergio García, Marithania Silvero}

\author[F. Cantero-Morán]{Federico Cantero-Morán}
\address{Universidad Autónoma de Madrid and ICMAT, Spain}
\email{federico.cantero@uam.es}

\author[S.\ García-Rodrigo]{Sergio García-Rodrigo}
\address{Universidad Autónoma de Madrid, Spain}
\email{sergio.garciar@uam.es}

\author[M.\ Silvero]{Marithania Silvero}
\address{Universidad de Sevilla and IMUS, Spain}
\email{marithania@us.es}
%\author{amsart}
%\date{July 2022}

\begin{document}

\maketitle
\begin{abstract}
As part of their construction of the Khovanov spectrum, Lawson, Lipshitz and Sarkar assigned to each cube in the Burnside category of finite sets and finite correspondences, a finite cellular spectrum. In this paper we extend this assignment to cubes in Burnside categories of infinite sets. This is later applied to the work of Akhmechet, Krushkal and Willis on the quantum annular Khovanov spectrum with an action of a finite cyclic group: we obtain a quantum annular Khovanov spectrum with an action of the infinite cyclic group.
\end{abstract}

%\tableofcontents

%\newpage

\section{Introduction}

In 2000, Mikhail Khovanov introduced the first homological link invariant as a categorification of Jones polynomial \cite{K00}. In its construction, entirely combinatorial, one associates a bigraded chain complex to a given link diagram, such that its homology is a link invariant known as Khovanov homology.

%Given a diagram $D$ of a link $L$, one can associate it a chain complex $CKh(D)$, whose homology is the Khovanov homology and its graded Euler characteristic is the Jones polynomial, both link invariants.

Some years later, Lipshitz and Sarkar introduced a stable homotopy refinement of Khovanov homology~\cite{LS14}. More precisely, they provided a method to associate to a given link diagram $D$ a finite suspension spectrum $\mathcal{X}(D)$, whose cohomology is the Khovanov homology of the associated link. Together with Lawson \cite{LLS17}, they gave an equivalent construction in which they first associate to each link diagram $D$ a functor $F(D)\colon\underline{2}^n\rightarrow\mathscr{B}$ from the cube category to the Burnside category (a {\it{cube}} in the Burnside category), and then define a realization functor from the category of cubes in the Burnside category to the homotopy category of \CW-spectra. The spectrum $\mathcal{X}(D)$ is obtained as the {\it{realization}} of the cube $F(D)$.

The goal of the present paper is to extend the realization functor to cubes in several enlargements of the Burnside $2$-category: the locally finite Burnside $2$-category, the bilocally finite Burnside $2$-category and the Burnside $2$-category with an action of a group $G$ (see Section \ref{ss:burnside} for definitions). This is achieved in Sections \ref{ss:3.2}, \ref{ss:3.3} and \ref{ss:3.4}, where we respectively prove the following theorems:

\begin{theorem}\label{thm:intro1} Every cube in the locally finite Burnside category yields a \CW-spectrum, well-defined up to homotopy equivalence. Moreover, any natural transformation between such cubes induces a map between \CW-spectra well-defined up to homotopy. 
\end{theorem}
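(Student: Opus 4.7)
The plan is to write a cube $F\colon \cube{n} \to \bigburnside$ as a filtered colimit of cubes in the finite Burnside category $\smallburnside$, and define the associated CW-spectrum as the colimit of the spectra produced by the Lawson--Lipshitz--Sarkar realization on each finite piece. Concretely, for each vertex $v \in \cube{n}$, the set $F(v)$ is exhausted by finite subsets; the local finiteness of the correspondences guarantees that any finite choice of elements at the vertices can be ``closed up'' under all the edge correspondences, yielding a genuine subcube $F_\alpha \colon \cube{n} \to \smallburnside$ of $F$. Letting $\alpha$ range over such finite closed choices produces a filtered system with $F = \colim_\alpha F_\alpha$.

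Applying the LLS realization to each $F_\alpha$ produces a finite CW-spectrum $\mathcal{X}_\alpha$, and the inclusions $F_\alpha \hookrightarrow F_\beta$ induce cellular inclusions $\mathcal{X}_\alpha \hookrightarrow \mathcal{X}_\beta$. One then defines $\mathcal{X}(F) := \colim_\alpha \mathcal{X}_\alpha$, which is a CW-spectrum (and coincides with the homotopy colimit, since the maps are cellular inclusions). Well-definedness up to homotopy equivalence follows from a cofinality argument: any two exhausting filtrations admit a common refinement, and filtered colimits along cellular inclusions are homotopy invariant under cofinal passage. Internal LLS choices (signs, cube subdivisions, etc.) are fixed at the finite level and assembled compatibly. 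For a natural transformation $\eta\colon F \to F'$, one closes up source and target simultaneously under the components of $\eta$ to obtain compatible filtrations; the restrictions $\eta_\alpha\colon F_\alpha \to F'_\alpha$ are transformations of finite cubes, which realize as homotopy classes of maps $\mathcal{X}_\alpha \to \mathcal{X}'_\alpha$ and pass to the colimit to produce $\mathcal{X}(\eta)$.

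The main obstacle is the saturation argument underlying the first step: one must produce finite subcubes that are cofinal inside $F$, which requires closing up coherently across all $2^n$ vertices and all edges of the cube simultaneously. Starting from any finite seed of elements and iteratively enlarging under all edge correspondences must terminate in finitely many steps -- this is exactly the content of local finiteness. Most of the technical work will lie in formalising this saturation procedure and checking that the LLS construction behaves functorially under filtered colimits of cellular inclusions; the homotopy-invariance statements for $\mathcal{X}(F)$ and $\mathcal{X}(\eta)$ then reduce to the analogous finite statements of LLS together with standard cofinality arguments.
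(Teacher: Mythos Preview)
Your approach is correct but takes a different route from the paper. The paper constructs a $k$-spatial refinement of the infinite cube $F$ \emph{directly}: for each chain $u_0\to\cdots\to u_\ell$ and each $x\in F(u_0)$, local finiteness forces the ``downstream'' functor $F_x$ to land in $\smallburnside$, so the finite extension lemma (their Lemma~\ref{lemma:recursion}) applies; wedging over all $x\in F(u_0)$ gives the refinement of $F$ on that chain (Lemma~\ref{lemma:recursiveb}). One then takes a single Vogt homotopy colimit and desuspends. You instead exhaust $F$ by finite subcubes $F_\alpha$ (downward-saturating a finite seed, which is exactly the same use of local finiteness), realize each via LLS, and take a filtered colimit of spectra.

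Both approaches exploit local finiteness identically to produce finite slices; the difference is whether one wedges pointwise inside a single spatial refinement (paper) or glues entire finite realizations externally (you). The paper's route is shorter: it produces one object and checks well-definedness by the standard $\cube{n+1}$ argument. Your route is more modular in spirit but not fully black-box: to obtain genuine cellular inclusions $\mathcal{X}_\alpha\hookrightarrow\mathcal{X}_\beta$ rather than mere homotopy classes, you must choose the spatial refinements compatibly---extending $\tilde F_\alpha$ to $\tilde F_\beta$ on the new wedge summands---and this requires opening up the LLS construction in essentially the way the paper does. (The phrase ``signs, cube subdivisions'' is off; the actual choices are box-map embeddings and their higher coherences.) One small caution on your saturation step: only \emph{downward} closure is needed and available---local finiteness controls $s^{-1}$, not $t^{-1}$---so make sure your ``closing up under all edge correspondences'' means propagating cosupports from higher to lower vertices only. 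Once these points are made precise, your colimit coincides with the paper's $|F|$, since $\|\tilde F\|=\bigcup_\alpha\|\tilde F_\alpha\|$.
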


\begin{theorem}\label{thm:intro2}
Every cube in the bilocally finite Burnside category yields a locally compact \CW-spectrum, well-defined up to proper homotopy equivalence. Moreover, any natural transformation between such cubes induces a map between \CW-spectra well-defined up to proper homotopy.
\end{theorem}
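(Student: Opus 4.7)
The plan is to mirror the proof of Theorem~\ref{thm:intro1}, but to refine every construction so that it takes place within the category of locally compact \CW-spectra and proper maps. Given a cube $F\colon\cube{n}\to\lfburnside$, I would first exhibit $F$ as a filtered colimit of sub-cubes $F_\alpha\colon\cube{n}\to\smallburnside$ that land in the ordinary (finite) Burnside category. Because correspondences in $\lfburnside$ have finite fibers on both sides, one can build each $F_\alpha$ by starting with a finite set of seeds at each vertex and closing up under the correspondences in both directions: the bilocal finiteness hypothesis forces the resulting saturation to remain finite. Each $F_\alpha$ then has a finite realization $|F_\alpha|$ by Lawson--Lipshitz--Sarkar, and I would set $|F|:=\colim_\alpha|F_\alpha|$.

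The crucial improvement over Theorem~\ref{thm:intro1} is that each transition map $|F_\alpha|\hookrightarrow|F_\beta|$ is an inclusion of a finite \CW-subspectrum into a spectrum whose cells meet $|F_\alpha|$ in only finitely many further cells, precisely because of the bilocal finiteness of the correspondences that attach those cells. The colimit $|F|$ therefore carries a natural locally finite \CW-structure, equivalently, is a locally compact \CW-spectrum: every cell lies in a finite \CW-subspectrum and is involved in only finitely many attaching maps.

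For well-definedness, two exhaustions $(F_\alpha)$ and $(F'_\beta)$ admit a common refinement obtained by pairwise unions, so the cofinality argument that proves uniqueness up to homotopy equivalence in the proof of Theorem~\ref{thm:intro1} upgrades here to uniqueness up to proper homotopy equivalence: the connecting maps are inclusions of finite subcomplexes, which are automatically proper, and any two choices of finite approximation at each stage are properly homotopic by the finite Lawson--Lipshitz--Sarkar theorem. Similarly, a natural transformation $\eta\colon F\Rightarrow G$ is realized by choosing, for each finite sub-cube $F_\alpha\subseteq F$, a finite sub-cube $G_{\alpha'}\subseteq G$ containing $\eta(F_\alpha)$, producing a compatible system $|F_\alpha|\to|G_{\alpha'}|$ whose colimit is the desired map $|F|\to|G|$.

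The main obstacle is verifying properness, since in general a colimit of proper maps need not be proper. What has to be checked is that for every finite subcomplex $K\subseteq|G|$, the preimage of $K$ under $|\eta|$ is contained in a finite subcomplex of $|F|$. This is where bilocal finiteness is essential on the other side: each element of $G(v)$ has only finitely many preimages under the correspondence representing $\eta_v$, so only finitely many cells of $|F|$ can map into any given cell of $|G|$. Once this finiteness is isolated at the level of correspondences, properness of the realization map follows cell-by-cell, and proper homotopy invariance follows by repeating the standard cofinality argument within the category of proper maps between locally compact \CW-spectra.
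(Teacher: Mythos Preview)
Your approach has a genuine gap: the claim that ``the bilocal finiteness hypothesis forces the resulting saturation to remain finite'' is false. Consider the $1$-cube with $F(1)=F(0)=\bZ$ and correspondence $\bZ\xleftarrow{s}A\xrightarrow{t}\bZ$ where $A=\{(n,n),(n,n+1):n\in\bZ\}$, $s(n,m)=n$, $t(n,m)=m$. This is bilocally finite (each fiber has two elements), yet the saturation of any nonempty seed under forward and backward closure is all of $\bZ$: from $\{0\}\subset F(1)$ one reaches $\{0,1\}\subset F(0)$, then back to $\{-1,0,1\}\subset F(1)$, then forward to $\{-1,0,1,2\}$, and so on indefinitely. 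So the finite sub-cubes $F_\alpha$ you describe do not exist.

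One might try to repair this by closing only in the forward direction along the poset order of $\cube{n}$; that does terminate (it uses only finiteness of source fibers) and does exhibit $\|\tilde F\|$ as an increasing union of finite subcomplexes, which suffices for local compactness. But the remainder of your argument still has a coherence problem. The realizations $|F_\alpha|$ depend on choices of spatial refinement, and the homotopy equivalences between different choices provided by the finite Lawson--Lipshitz--Sarkar theorem are produced one $\alpha$ at a time; nothing in your outline forces them to assemble into a natural transformation of diagrams over the filtered index category. Without that, the colimit does not inherit a well-defined proper homotopy type, and the same issue recurs when you try to realize a natural transformation $\eta$. The paper bypasses all of this by working globally: it constructs a single spatial refinement of the entire infinite cube (already available from the locally finite case), checks directly that bilocal finiteness makes every box map and every box-map homotopy proper, and then invokes a general lemma (Lemma~\ref{lemma:propervogt}) showing that the Vogt homotopy colimit of a proper homotopy coherent diagram over $\cubeplus{n}$ is locally compact and is functorial under proper homotopy coherent natural transformations.
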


A locally compact \CW-spectrum has two associated cohomologies: its usual cohomology and the compactly supported cohomology, 

\begin{theorem}\label{thm:G}
Every cube in the $G$-equivariant Burnside category yields a free \GCW-spectrum, well-defined up to $G$-homotopy equivalence. Moreover, any natural transformation between such cubes induces a $G$-equivariant map between \CW-spectra well-defined up to $G$-homotopy .
\end{theorem}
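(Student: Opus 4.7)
The plan is to bootstrap from Theorem~\ref{thm:intro1} (or Theorem~\ref{thm:intro2}) by installing a $G$-action on the resulting spectrum. A cube $F\colon \cube{n}\to \gburnside{G}$ in the $G$-equivariant Burnside category is, after forgetting the $G$-action on the correspondences, a cube in the locally finite (or bilocally finite) Burnside category, and the previous theorems thus yield a \CW-spectrum $\mathcal{X}(F)$ whose cells are indexed by the points of the underlying sets of the $G$-sets $F(\vec{u})$. Each $g\in G$ acts on every $F(\vec{u})$ and intertwines the correspondences of $F$, so it defines a natural automorphism of this underlying cube and hence, by the functoriality part of the previous theorem, a self-homotopy equivalence of $\mathcal{X}(F)$; a priori these self-maps only compose up to homotopy.

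The crucial step, and the main obstacle, is to strictify this homotopy $G$-action into an honest $G$-action that respects the \CW-structure. I would do this by performing the realization $G$-equivariantly from the outset: the cells at vertex $\vec{u}$ are naturally parametrized by the $G$-set $F(\vec{u})$, and the attaching maps, being determined by the correspondences, are automatically $G$-equivariant. What must be verified is that the contractible spaces of auxiliary choices entering the realization (box maps, coherent frames, composition data, etc., as in~\cite{LLS17}) admit $G$-equivariant choices. Freeness of the $G$-action on each $F(\vec{u})$ reduces this to making choices on a set of orbit representatives and then extending them $G$-equivariantly by transport, so no obstruction arises; the induced cellular action is free, and $\mathcal{X}(F)$ becomes a free \GCW-spectrum.

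The remaining statements follow by running the up-to-homotopy uniqueness arguments of the previous theorems in the equivariant setting: any two $G$-equivariant realizations of the same cube are connected by a $G$-equivariant homotopy, since the space of intertwining homotopies is again $G$-contractible by the same orbit-representative argument. A natural transformation between cubes in $\gburnside{G}$ is, in particular, a natural transformation of the underlying cubes that commutes with the $G$-action, so the induced map between realizations can be constructed $G$-equivariantly and is well-defined up to $G$-homotopy.
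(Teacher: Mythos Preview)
Your proposal is correct and follows essentially the same route as the paper. The paper phrases the key step as constructing a spatial refinement of the \emph{quotient} cube $F/G$ (which lands in the finite Burnside category since the $G$-sets are $G$-finite) and then lifting it uniquely along the quotient maps $\tilde{F}(u)\to\tilde{F}_G(u)$; this is exactly your ``choose on orbit representatives and extend by transport'' argument, and the treatment of uniqueness and of natural transformations proceeds identically.
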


A \GCW-spectrum $\bE$ is $G$-finite if its quotient is a finite spectrum. In that case $\bE$ is also a locally compact \CW-spectrum, and thus has an associated compactly supported cohomology.

\subsection*{Application}
In \cite{APS04} Asaeda, Przytycki and Sikora introduced annular Khovanov homology, a triply-graded homological invariant of annular links (i.e., links in the thickened annulus $\mathbb{A}\times I$) which categorifies the annular Jones polynomial from \cite{HP89}. Some years later, Beliakova, Putyra and Wehrli defined a deformation of this homology theory, known as quantum annular Khovanov homology \cite{BPW19}. This arises as the homology of a chain complex $\CKhannular(D)$ with two extra gradings and with a free action of the infinite cyclic group $\bZ$. In particular, it is not finitely generated. Quotienting by the subgroup $r\bZ\subset \bZ$ one obtains a finitely generated chain complex $\CKhannularr(D)$ with an action of the finite cyclic group $\bZ_r$ of order $r$. 

Recently, Akhmechet, Krushkal and Willis \cite{AKW21} associated to each link diagram, a cube in the Burnside category with a free action of $\bZ_r$ whose realization is a finite \CW-spectrum $\mathcal{X}^r(D)$ endowed with a $\bZ_r$-action, whose cellular chain complex is $\CKhannularr(D)$.

An application of our Theorem \ref{thm:G} allows to extend their construction as follows:

%\old{Our theorems allow us to build a stable homotopy type avoiding this replacement. In fact, combining the argument in \cite{AKW21} with our Theorem \ref{thm:G} we prove the following:}

\begin{theorem}\label{thm:introkhov}
Every link diagram in the annulus yields a cube in the $\bZ$-equivariant Burnside category, whose realization is a $\bZ$-finite free \groupCW{\bZ}-spectrum well-defined up to stable equivariant homotopy equivalence. This equivariant stable homotopy type does not depend on the chosen diagram. Moreover, the compactly supported cohomology of this locally compact spectrum recovers the quantum annular Khovanov homology of the link.
\end{theorem}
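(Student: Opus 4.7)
\medskip

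The plan is to lift the construction of Akhmechet--Krushkal--Willis to the universal (infinite cyclic) cover, apply Theorem~\ref{thm:G} with $G=\bZ$, verify $\bZ$-finiteness and Reidemeister invariance, and finally identify the compactly supported cellular cochain complex with $\CKhannular(D)$.

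\textbf{Step 1: A $\bZ$-equivariant cube.} Given an annular link diagram $D$ with $n$ crossings, I would mimic the AKW assignment vertex by vertex. At each vertex $v\in\cube{n}$, the complete resolution $D_v$ is a disjoint union of circles in the annulus, classified into trivial and essential; AKW's generating set of labellings for $\CKhannular(D_v)$ is naturally a free $\bZ$-set, the infinite cyclic action being multiplication by the generator $t$ encoding the winding number. This $\bZ$-set has finitely many $\bZ$-orbits. The elementary saddle cobordisms defining the cube maps are described in \cite{AKW21} by explicit correspondences of generators with signs; inspection shows that these correspondences are defined over $\bZ[t^{\pm 1}]$ before any reduction, hence assemble into a genuine cube $F^{\bZ}(D)\colon \cube{n}\to\gburnside{\bZ}$ whose quotient by the subgroup $r\bZ\subset\bZ$ recovers the AKW cube.

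\textbf{Step 2: Realization and $\bZ$-finiteness.} Applying Theorem~\ref{thm:G} to $F^{\bZ}(D)$ produces a free $\bZ$-\CW-spectrum $\cX^{\bZ}(D)$, well-defined up to $\bZ$-homotopy equivalence. Since the quotient cube $F^{\bZ}(D)/\bZ$ has finite vertex sets, its realization (obtained from the quotient construction or, equivalently, from Theorem~1.1 of LLS) is a finite \CW-spectrum. Hence $\cX^{\bZ}(D)$ is $\bZ$-finite, and by the remark following Theorem~\ref{thm:G} it is in particular a locally compact \CW-spectrum with a well-defined compactly supported cohomology.

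\textbf{Step 3: Reidemeister invariance.} For each Reidemeister move relating two diagrams $D,D'$, AKW produce a chain homotopy equivalence of quantum annular complexes arising from natural transformations of cubes that, again, are defined over $\bZ[t^{\pm 1}]$ prior to quotienting by $t^r-1$. These lift to natural transformations of cubes in $\gburnside{\bZ}$ between $F^{\bZ}(D)$ and $F^{\bZ}(D')$ (up to stabilisation to match cube dimensions, handled as in LLS). The second part of Theorem~\ref{thm:G} then produces $\bZ$-equivariant maps of spectra well-defined up to $\bZ$-homotopy; since each such map descends to an equivalence on quotients (by AKW) and both source and target are free $\bZ$-\CW-spectra, a standard cellular comparison shows the map itself is a $\bZ$-equivariant stable homotopy equivalence.

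\textbf{Step 4: Compactly supported cohomology.} For a $\bZ$-finite free $\bZ$-\CW-spectrum, a cellular cochain is compactly supported precisely when it is nonzero on finitely many cells; identifying the cells in each degree with the $\bZ$-orbits of generators of the cube, one obtains $C^{*}_{c}(\cX^{\bZ}(D))\cong \CKhannular(D)$ as $\bZ[t^{\pm 1}]$-modules, with differential matching the one read off from the correspondences in Step~1, exactly as in the LLS identification between $C^{*}$ of the Khovanov spectrum and the Khovanov complex.

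\textbf{Main obstacle.} The essential difficulty is Step~3: one must check that every ingredient in AKW's Reidemeister invariance proof (saddle maps, ladybug matching, handle slides, stabilisation) makes sense as a natural transformation in the $\bZ$-equivariant Burnside $2$-category, without appealing to the identity $t^{r}=1$. Where their proof uses a chain homotopy defined implicitly via finite sums over $\bZ_{r}$, one must replace it with an honest correspondence of $\bZ$-sets; this should be possible because each nontrivial contribution in the finite sum already comes from a single $\bZ$-orbit representative, but the bookkeeping is delicate.
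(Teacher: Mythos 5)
Your proposal is correct and follows essentially the same route as the paper: lift the Akhmechet--Krushkal--Willis cube to $G=\bZ$ by observing that the relation $\mathfrak{q}^r=1$ is never used, realize it via Theorem~\ref{thm:G}, deduce invariance by transporting their Reidemeister argument using the Whitehead-type statement for free \GCW-spectra (Proposition~\ref{prop:A-G-CW}), and identify the compactly supported cochain complex with $\CKhannular(D)$ via the finitely supported duality of Section~\ref{section:duality}. The obstacle you flag in Step~3 is exactly the point the paper addresses, and your Step~4 is the same identification (note only that the cells correspond to the generators themselves, with the $\bZ$-orbits furnishing the $\bZ[t^{\pm1}]$-basis).
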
 

The structure of the paper is as follows. In Section 2 we introduce the categories we deal with in the paper; in particular, we introduce Burnside category, its generalizations and the cubes in these categories. Section 3 is devoted to extend the realization of Lawson, Lipshitz and Sarkar to the setting of (bi)locally finite and $G$-equivariant Burnside categories and to provide the proofs of Theorems \ref{thm:intro1}--\ref{thm:G}. Section 4 compares the constructions in the previous section to the constructions of cubes in abelian groups used to define the various Khovanov homologies. Finally, in Section 5 we apply our results to the setting of quantum annular Khovanov homology and provide a proof of Theorem \ref{thm:introkhov}. The Appendix summarizes some definitions and results about locally compact spectra.

\subsection*{Acknowledgements} 
All authors are partially supported by grant SI3/PJI/2021-00505 from Comunidad de Madrid.
F. C. was funded by PID2019-108936GB-C21 from the Spanish government. M. S. was partially supported by Spanish Research Project PID2020-117971GB-C21 and by IJC2019-040519-I, funded by MCIN/AEI/10.13039/501100011033.

\section{The cube and the Burnside category}

\subsection{The cube}
The {\it{($n$-dimensional) cube}} $\cube{n}$ is the partially ordered set whose elements are $n$-tuples $u=(u_1,\ldots, u_n)\in \{0,1\}^n$ and $u\geq v$ if $u_i\geq v_i$ for every $i$. The elements of this poset are called \emph{vertices}. It has an initial element $\uno = (1,1,\ldots,1)$ and a terminal element $\cero = (0,0,\ldots,0)$. The \emph{degree} of a vertex $u$ is $\vert u\vert=\sum_{i=1}^n u_i$.

We may regard the poset $\cube{n}$ as a category whose objects are the vertices of the poset and the morphism set $\Hom(u,v)$ has a single element $\varphi_{u,v}$ if $u\geq v$ and is empty otherwise. We say that the morphism $\varphi_{u,v}$ is an \emph{edge} between vertices $u$ and $v$.

\subsection{The Burnside categories}\label{ss:burnside}

Let $G$ be a group. A {\it{$G$-set}} is a set with an action of $G$. A $G$-set is \emph{$G$-finite} if it has finitely many orbits. 

A \emph{correspondence} from a $G$-set $X$ to a $G$-set $Y$ is a triple $(A,s,t)$, where $A$ is a $G$-set and $s\colon A\rightarrow X$ and $t\colon A\rightarrow Y$ are equivariant maps, called the \emph{source} and \emph{target} map, respectively. A correspondence is
\begin{itemize}
    \item \emph{free} if the action of $G$ is free on $X,Y$ and $A$;
    \item \emph{finite} if the sets $X,Y$ and $A$ are finite;
    \item \emph{$G$-finite} if the sets $X,Y$ and $A$ are $G$-finite; 
    \item \emph{locally finite} if the source map $s$ has finite preimages (i.e., $|s^{-1}(x)| < \infty$, for every $x \in X$);
    \item \emph{bilocally finite} if both the source and target maps have finite preimages (i.e., $|s^{-1}(x)| < \infty$ and $|t^{-1}(y)| < \infty$, for every $x \in X$, $y \in Y$).
\end{itemize}

The \emph{category of correspondences} from $X$ to $Y$ has as objects the correspondences from $X$ to $Y$ and the following morphisms: a morphism from a correspondence $(A,s_A,t_A)$ to a correspondence $(B,s_B,t_B)$ is an equivariant fibrewise bijection from $A$ to $B$, that is, an equivariant bijection $f\colon A\to B$ making the following diagram commute: 
$$
{\begin{tikzcd}
    & A \arrow[rd, "t_A"] \arrow[ld, "s_A"'] \arrow[dd,"f"]   &   \\
X &                                          & Y.\\
    & B \arrow[ru, "t_B"] \arrow[lu, "s_B"']   &   
\end{tikzcd}}
$$
These bijections are composed as usual, and the identity bijection plays the role of the identity morphism.

The \emph{horizontal composition} $X\xleftarrow{s}C\xrightarrow{t}Z$ of two correspondences
$$
{\begin{tikzcd}
    & A \arrow[rd, "t_A"] \arrow[ld, "s_A"']   &&&  B \arrow[rd, "t_B"] \arrow[ld, "s_B"']   &   
\\
    X &                                          & Y &Y&& Z
\end{tikzcd}}
$$
is the fiber product or pullback $C=B\times_Y A=\{(b,a)\in B\times A \,|\, t_A(a)=s_B(b)\}$ with source and target maps given by $s(b,a)=s_A(a)$ and $t(b,a)=t_B(b)$,
$$\begin{tikzcd}
    &                                            & C \arrow[rd] \arrow[ld] \arrow[lldd, "s"', bend right] \arrow[rrdd, "t", bend left]   &                                            &   \\
    & A \arrow[rd, "t_A"] \arrow[ld, "s_A"']   &                                                                      & B \arrow[ld, "s_B"'] \arrow[rd, "t_B"]   &   \\
X   &                                            & Y                                                                    &                                            & Z
\end{tikzcd}$$
and the action on $C$ is inherited from the diagonal action, that is, $g(b,a)=(gb,ga)$, for every $g\in G$.

The horizontal composition of two fibrewise bijections is defined as their fibrewise product too. The \emph{identity correspondence} of a $G$-set $X$ is the correspondence $X\xleftarrow{\Id} X\xrightarrow{\Id} X$.

\begin{definition}\label{def:gburnsidecat}
    The \emph{$G$-equivariant Burnside category} $\gburnside{G}$ is the $2$-category whose objects are free $G$-finite $G$-sets and whose category of morphisms from a $G$-set $X$ to a $G$-set $Y$ is the category of $G$-finite correspondences from $X$ to $Y$. The horizontal composition in $\gburnside{G}$ is the horizontal composition of correspondences and fibrewise bijections. Therefore, $1$-morphisms are correspondences and $2$-morphisms are fibrewise bijections.
\end{definition}
For the next definition, we take $G$ to be the trivial group.
\begin{definition}\label{def:burnsidecat}
The \emph{big Burnside category} $\hugeburnside$ is the 2-category whose objects are sets and whose category of morphisms from a set $X$ to a set $Y$ is the category of correspondences from $X$ to $Y$. As in the previous definition, $2$-morphisms are given by fibrewise bijections. We will consider the following subcategories:
\begin{itemize}
    \item The \emph{locally finite Burnside category} is the subcategory $\bigburnside$ of $\hugeburnside$ whose correspondences are required to be locally finite.
    \item The \emph{bilocally finite Burnside category} is the subcategory $\lfburnside$ of $\hugeburnside$ whose correspondences are required to be bilocally finite.
    \item The \emph{Burnside category} is the subcategory $\smallburnside$ of $\hugeburnside$ whose correspondences are required to be finite.
\end{itemize}
\end{definition}

\begin{lemma}\label{lemma:forgetfulfactors}
	The forgetful functor $\gburnside{G}\to \hugeburnside$ factors through the bilocally finite Burnside category $\lfburnside$.
\end{lemma}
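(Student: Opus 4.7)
The plan is to unpack what it means for a correspondence $(A,s,t)$ between free $G$-finite $G$-sets $X$ and $Y$ to be $G$-finite, and show directly that the source and target maps have finite fibres after forgetting the $G$-action. Since $\gburnside{G}\to\hugeburnside$ is literally the forgetful functor on objects and morphisms, all that needs to be verified is that every $G$-finite correspondence between free $G$-finite objects is bilocally finite as a plain correspondence.

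First I would observe that although the definition of $\gburnside{G}$ only requires $A$ to be $G$-finite, the equivariance of $s$ together with the freeness of the action on $X$ automatically forces the action on $A$ to be free: if $g\cdot a = a$ for some $a\in A$, then $g\cdot s(a)=s(g\cdot a)=s(a)$, and freeness on $X$ gives $g=e$. Therefore $A$ decomposes as a finite disjoint union of free orbits, $A = G\cdot a_1\sqcup\cdots\sqcup G\cdot a_k$, each isomorphic to $G$ as a $G$-set.

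Next I would fix $x\in X$ and count $|s^{-1}(x)|$ orbit by orbit. An element of $G\cdot a_i$ has a unique expression $g\cdot a_i$ (because the action on $A$ is free), and it lies in $s^{-1}(x)$ precisely when $g\cdot s(a_i)=x$. Since the stabilizer of $x$ in $X$ is trivial, this equation has at most one solution in $g$. Consequently each orbit contributes at most one preimage, giving $|s^{-1}(x)|\le k<\infty$. The identical argument applied to $t$ and any $y\in Y$ shows $|t^{-1}(y)|<\infty$, so $(A,s,t)$ is bilocally finite.

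The verification that composition and $2$-morphisms are also bilocally finite is automatic, since those structures in $\gburnside{G}$ and $\lfburnside$ are computed by the same underlying set-theoretic constructions (pullbacks and fibrewise bijections). The only real content is the finite-fibre check above, and the key (and only mildly subtle) observation is that freeness of the $G$-action propagates along equivariant maps in the opposite direction to finiteness of fibres — this is the step I expect might be overlooked, but once noticed everything else is immediate.
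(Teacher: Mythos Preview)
Your proof is correct and follows essentially the same idea as the paper's: each orbit of $A$ contributes at most one element to any fibre of $s$ (respectively $t$), and there are only finitely many orbits. The paper's argument is marginally more direct in that it does not first establish freeness of $A$; it observes immediately that if two elements of the same orbit of $A$ mapped to the same $x\in X$, freeness of $X$ alone would force them to coincide. Your detour through freeness of $A$ is harmless but unnecessary for the fibre count.
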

\begin{proof}
If $f\colon A\to B$ is an equivariant map to a free $G$-set $B$ and $b\in B$, then $f^{-1}(b)$ contains at most one element from each orbit. If there are finitely many orbits, then $f^{-1}(b)$ is finite. Applying this to the source and target maps in a $G$-finite correspondence $X\xleftarrow{s} A\overset{t}{\to} Y$ yields the result.
\end{proof}

Let $R$ be a commutative ring. The \emph{linearization functor} 
\begin{align*}
    \linear\colon\bigburnside&\lra \Mod{R}
\end{align*}
maps a $G$-set $X$ to the free $R$-module $\mathcal{A}(X)=R\langle X\rangle$ generated by $X$ and a correspondence $X\xleftarrow{s}A\xrightarrow{t}Y$ to the homomorphism 
\begin{align*}
    \linear(A,s,t)\colon\linear(X)  \longrightarrow & \linear(Y),
\end{align*}
whose value on a generator $x\in X$ is:
\[
    \sum_{y\in Y}\vert s^{-1}(x)\cap t^{-1}(y)\vert \cdot  y=\sum_{a\in s^{-1}(x)}t(a).
\]
Precomposing this functor with the forgetful functor $\gburnside{G}\to \lfburnside\subset \bigburnside$, we obtain a functor 
\[
    \linear\colon{\gburnside{G}}\lra \Mod{R[G]},
\]
since the action of $G$ makes $\linear(X)$ a $R[G]$-module.

\subsection{Cubes in the Burnside category and their totalizations} In this section we extend \cite[Definitions 4.2, 4.3 and Lemma 4.4]{LLS20} to the setting of the big Burnside category. 

\begin{definition}
A \emph{cube in the big Burnside category} is a (strictly unitary lax) $2$-functor $F$ from the category $\cube{n}$ to the $2$-category $\hugeburnside$. In detail, it consists of the following data:
\begin{itemize}
\item For each $u\in \{0,1\}^n$, a set $F(u)$.
\item For every $u> v$, a correspondence $F(u)\xleftarrow{s}F(\varphi_{u,v})\xrightarrow{t}F(v)$.
\item For every $u> v> w$, a $2$-morphism $F_{u,v,w}\colon F(\varphi_{v,w})\times_{F(v)}F(\varphi_{u,v})\rightarrow F(\varphi_{u,w})$ such that the following diagram commutes for every $u> v> w> z$:
$$\begin{tikzcd}
F(\varphi_{w,z})\times_{F(w)} F(\varphi_{v,w})\times_{F(v)} F(\varphi_{u,v}) \arrow[r,"\Id\times F_{u,v,w}"]\arrow[d,"F_{v,w,z}\times \Id"'] & F(\varphi_{w,z})\times_{F(w)} F(\varphi_{u,w}) \arrow[d,"F_{u,w,z}"] \\
F(\varphi_{v,z})\times_{F(v)} F(\varphi_{u,v}) \arrow[r,"F_{u,v,z}"'] & F(\varphi_{u,z})
\end{tikzcd}$$
\end{itemize}
\end{definition}

We may replace the big Burnside category by any of the subcategories introduced in Definition \ref{def:burnsidecat}, or by the $G$-equivariant Burnside category $\gburnside{G}$. Let $\hugeburnside^{\cube{n}}$ be the functor category, whose objects are strictly unitary lax $2$-functors from $\cube{n}$ to $\hugeburnside$ and morphisms are natural transformations between these functors. 

Let $F\colon \cube{n}\to \bigburnside$ be a strictly unitary lax $2$-functor.
\begin{definition}
The \emph{totalization of $F$} is the chain complex of $R$-modules with underlying graded module
$$\Tot(F)=\bigoplus_{u\in\underline{2}^n}\linear(F(u)),$$ with homological degree of $\linear(F(u))$ equal to $\vert u\vert$ and differential 
\[ \partial\colon\bigoplus_{\vert u\vert=i}\linear(F(u))\rightarrow\bigoplus_{\vert v\vert=i-1}\linear(F(v))
\] 
sending a generator $x\in F(u)$ to the sum $\partial(x) = (-1)^{s_{u,v}}\linear(\varphi_{u,v})(x)$ if $\vert u \vert = \vert v\vert+1$ and to $0$ otherwise. Define $s_{u,v} = \sum_{j<k} u_j$ where $k$ is the only coordinate such that $u_k\neq v_k$.
\end{definition}

This chain complex can alternatively be constructed as the following homotopy colimit: Define the \emph{extended cube category of dimension $n$}, $\cubeplus{n}$, as the result of adding to the cube category an extra object $*$ that receives an arrow $\varphi_{u,*}$ from every vertex $u$ in $\cube{n} \setminus \cero$. Then the cube of $R$-modules $\linear\circ F$ can be extended to a functor $K\colon \cubeplus{n}\to \Mod{R}$ by setting $F(*)$ to be the trivial module. As a consequence we have \cite[Section 2.1]{LS17}:
\begin{lemma}
    The totalization of $F$ is homotopy equivalent to the homotopy colimit of $K$.
\end{lemma}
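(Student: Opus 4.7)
My plan is to show that both sides are quasi-isomorphic to a common iterated mapping cone construction.

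I would first rewrite $\Tot(F)$ as an iterated mapping cone. A cubical diagram of chain complexes $\cube{n}\to\Ch(R)$ always admits such a presentation: its totalization is the mapping cone of a natural chain map between the totalizations of its two parallel $(n-1)$-dimensional faces (obtained by fixing the last coordinate to $0$ or $1$). Inductively each face is itself such a cone, down to the $0$-dimensional cubes $\linear(F(u))$ sitting in homological degree $|u|$. The signs $(-1)^{s_{u,v}}$ appearing in the totalization's differential are precisely the Koszul signs produced by this iterated cone construction.

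I would then show that $\hocolim K$ for $K\colon\cubeplus{n}\to\Mod{R}$ with $K(*)=0$ admits the very same iterated mapping cone presentation. The role of the extra object $*$ with $K(*)=0$ is precisely to produce a mapping cone at each inductive step: the homotopy pushout $0\leftarrow K(u)\to K(v)$ computes the cone of $K(u)\to K(v)$, and this three-object span embeds into $\cubeplus{n}$ as the subcategory on $\{u,v,*\}$. Iterating this observation along the cube coordinates, together with a Fubini-type argument allowing the homotopy colimit to be computed one coordinate at a time, yields the same iterated cone presentation as for $\Tot(F)$. The fact that $*$ is attached only to non-$\cero$ vertices is exactly what is needed: at the final inductive step, one recovers the basic mapping cone of $\linear(\varphi_{\uno,\cero})$, and no spurious cone is added over $\cero$.

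The main obstacle is the setup of this induction: $\cubeplus{n+1}$ does not decompose as a direct product, and one must split it carefully along the last cube coordinate while tracking how the cone-point $*$ fits into each slice. Concretely, the last-coordinate-$1$ slice is equivalent to a full cube with a terminal object added, while the last-coordinate-$0$ slice is itself $\cubeplus{n}$, and the Fubini step has to be applied with this asymmetry in mind. Once this bookkeeping is done, both the identification of the homotopy colimit with the iterated cone and the matching of signs follow from standard manipulations.
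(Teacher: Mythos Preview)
The paper does not supply its own proof of this lemma; it simply records it as a consequence of \cite[Section~2.1]{LS17}. So there is no argument in the paper to compare yours to, and your sketch is already more than what the paper offers. The iterated–mapping–cone strategy you outline is the standard one and is correct in spirit.

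That said, the inductive step as you describe it has a genuine gap. You slice $\cubeplus{n+1}$ along the last coordinate into a ``$1$-slice'' (the full subcategory on $\{(u,1)\}\cup\{*\}$, a cube with $*$ terminal) and a ``$0$-slice'' ($\cong\cubeplus{n}$), and then invoke Fubini. But no functor $\cubeplus{n+1}\to\cubeplus{1}$ or $\cubeplus{n+1}\to\cubeplus{n}$ is compatible with this slicing: the arrow $(\cero_n,1)\to *$ in $\cubeplus{n+1}$ would force $0\to *$ in $\cubeplus{1}$, respectively $\cero_n\to *$ in $\cubeplus{n}$, neither of which exists. Equivalently, because $*$ is terminal in your $1$-slice and $K(*)=0$, the homotopy colimit over that slice is $0$, not the totalization of the $1$-face that the iterated-cone formula requires. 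The same arrow obstructs writing $\cubeplus{n+1}$ as the collage of the obvious functor $\cube{n}\to\cubeplus{n}$. So ``Fubini with the asymmetry in mind'' does not produce the cone of the two face-totalizations, and some different bookkeeping is needed.

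One way that does work is to pass through the punctured cube: writing $\cubeplus{n}$ as the gluing of $\cube{n}$ and $(\cube{n}\smallsetminus\{\cero\})^{\triangleright}$ along $\cube{n}\smallsetminus\{\cero\}$ yields
\[
\hocolim_{\cubeplus{n}} K \;\simeq\; \hocofib\bigl(\hocolim_{\cube{n}\smallsetminus\{\cero\}} K \longrightarrow K(\cero)\bigr),
\]
and the right-hand side can then be identified with $\Tot(F)$ by an induction on $n$ that uses the honest product decomposition $\cube{n}=\cube{n-1}\times\cube{1}$ (where Fubini applies cleanly, since no cone point is present).
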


This rule is functorial and defines the \emph{totalization functor} to the category of chain complexes of $R$-modules
\[
    \Tot\colon\bigburnside^{\cube{n}}\rightarrow\Chain{R}.
\] 

\section{Cubes in Burnside categories and spectra}
In \cite{LLS20} Lawson, Lipshitz and Sarkar defined a functor from the category of cubes in the Burnside category to the homotopy category of \CW-spectra. In this section we explain how to promote this construction to cubes in the locally finite Burnside category. Then, we show that for cubes in the bilocally finite Burnside category, the \CW-spectra obtained are locally compact (see the Appendix for the definition of locally compact \CW-spectrum). Finally, we explain that the realization of certain cubes in a $G$-equivariant Burnside category yield locally compact \GCW-spectra.

When defining the geometric realization of the cubes above, we need to take the homotopy colimit of some functors satisfying certain properties, called homotopy coherent diagrams; we recall these notions:

\begin{definition}\label{def:homotopycoherentdiag}
A {\it{homotopy coherent diagram}} from a small category $\mathscr{C}$ to the category of based topological spaces $\Top_\ast$, $F\colon \mathscr{C} \to \Top_\ast$, consists of:
\begin{itemize}
\item For each $u\in \ob(\mathscr{C})$ a space $F(u)\in \ob(\Top_\ast)$.
\item For each $n\geq 1$ and each sequence $u_0 \xrightarrow{f_1}u_1\xrightarrow{f_2}\dots\xrightarrow{f_n}u_n$ of composable morphisms in $\mathscr{C}$, a continuous map $$F(f_n,\dots,f_1)\colon[0,1]^{n-1}\times F(u_0)\longrightarrow F(u_n)$$ with $F(f_n,\dots,f_1)([0,1]^{n-1}\times\{\ast\})=\ast$, satisfying $F(f_n,\dots,f_1)(t_1,\dots,t_{n-1})=$
\[\left\lbrace\begin{array}{ll}
F(f_n,\dots,f_2)(t_2,\dots,t_{n-1}) & f_1=\Id; \\
F(f_n,\dots,f_{i+1},f_{i-1},\dots,f_1)(t_1,\dots,t_{i-1}t_i,\dots,t_{n-1}) & f_i=\Id, \, 1<i<n; \\
F(f_{n-1},\dots,f_1)(t_1,\dots,t_{n-2}) & f_n=\Id; \\
{[}F(f_n,\dots,f_{i+1})(t_{i+1},\dots,t_{n-1})]\circ [F(f_i,\dots,f_1)(t_1,\dots,t_{i-1})] & t_i=0; \\
F(f_n,\dots,f_{i+1}\circ f_i,\dots,f_1)(t_1,\dots,t_{i-1},t_{i+1},\dots,t_{n-1}) & t_i=1.
\end{array}\right.\]
\end{itemize}
\end{definition}
\vspace{0.1cm}

Given a homotopy coherent diagram $F\colon \mathscr{C} \to \Top_\ast$, its homotopy colimit can be computed as the following quotient 
\begin{equation}\label{hocolim}
\hocolim F\simeq\{\ast\}\amalg\coprod_{n\geq 0}\coprod_{u_0\xrightarrow{f_1}\dots\xrightarrow{f_n}u_n}[0,1]^n\times F(u_0)/\sim,
\end{equation}
where $u_0\xrightarrow{f_1}\dots\xrightarrow{f_n}u_n$ is a sequence of composable morphisms in $\mathscr{C}$. If $(t_1,\dots,t_n)\in[0,1]^n$, $p\in F(u_0)$ and $\ast$ is the basepoint, then the equivalence relation is given by $(f_n,\dots,f_1; t_1,\dots,t_n;p)\sim$ 
\[\left\lbrace\begin{array}{ll}
(f_n,\dots,f_2;t_2,\dots,t_n;p) & f_1=\Id; \\
(f_n,\dots,f_{i+1},f_{i-1},\dots,f_1;t_1,\dots,t_{i-1}t_i,\dots,t_n;p) & f_i=\Id, \, 1<i; \\
(f_n,\dots,f_{i+1};t_{i+1},\dots,t_n;F(f_{i},\dots,f_1)(t_1,\dots,t_{i-1};p)) & t_i=0; \\
(f_n,\dots,f_{i+1}\circ f_{i},\dots,f_1;t_1,\dots,t_{i-1},t_{i+1},\dots,t_n;p) & t_i=1, \, i<n; \\
(f_{n-1},\dots,f_1;t_1,\dots,t_{n-1};p) & t_n=1; \\
\ast & p=\ast.
\end{array}\right.\]
We refer to this model of the homotopy colimit as the Vogt homotopy colimit \cite{V73}.

Observe that in the particular case when all isomorphisms in the category $\mathscr{C}$ correspond to identity maps, one can assume that none of the maps $f_i$ considered when $n>0$ are equal to the identity, and take the equivalence relation generated by the last four conditions when constructing the homotopy colimit of $F$ (see \cite[Observation~4.12]{LLS20}).

\subsection{The realization of a cube in the Burnside category}\label{SectionRealizationBurnside}

Given a set $X$, denote by $X_+$ the disjoint union of $X$ together with a basepoint, and let $S^k$ be the pointed sphere of dimension $k$. Recall from \cite{LLS20} that a \emph{$k$-dimensional box map} refining a correspondence $X\overset{s}{\la} A\overset{t}{\to} Y$ is a continuous map $g\colon X_+\wedge S^k\to Y_+\wedge S^k$ satisfying:
\begin{enumerate}
    \item $g$ is the composition of a continuous map $f\colon X_+\wedge S^k\to A_+\wedge S^k$ with the product map $t\wedge \Id\colon A_+\wedge S^k\to Y_+\wedge S^k$.
    \item $f$ is the collapse map associated to a rigid embedding $e\colon [0,1]^k \times A\to [0,1]^k\times X$ such that $e(x,a)\in [0,1]^k\times \{s(a)\}$.
\end{enumerate}

The set of all box maps $E(s)$ refining a given correspondence $X\overset{s}{\la} A\overset{t}{\to} Y$ is in bijection with a subset of the space of all embeddings of $[0,1]^k \times A$ into $[0,1]^k\times X$, and this endows $E(s)$ with a topology. The inclusion of the set of box maps $E(s)$ into the set of continuous maps $X_+\wedge S^k\to Y_+\wedge S^k$ is then continuous and the space $E(s)$ is $(k-2)$-connected.

\begin{definition}\label{definition:spatref} Given a strictly unitary lax $2$-functor $F\colon \cC \to \smallburnside$ from a small category $\cC$ to $\smallburnside$, a \emph{$k$-spatial refinement} of $F$ is a homotopy coherent diagram $\tilde{F}\colon \cC\to \Topp$ consisting of: 
\begin{enumerate}
	\item For each $u\in \ob(\cC)$, the space $\tilde{F}(u) = F(u)_+\wedge S^k$.
	\item For each morphism $f\colon u\to v$ in $\cC$, a box map $\tilde{F}(f)$ refining the correspondence $F(f)$ from $F(u)$ to $F(v)$.
	\item For each sequence $u_0\overset{f_1}{\to} u_1\overset{f_2}{\to} \ldots \overset{f_n}{\to} u_n$ of composable morphisms 
 in $\cC$, a continuous family 
    \[
        \tilde{F}(f_n,\ldots,f_1)\colon [0,1]^{n-1}\to \Map(\tilde{F}(u_0),\tilde{F}(x_n))
    \]
    of box maps whose restriction to $[0,1]^{n-k-1}\times \{0\}\times [0,1]^{k-1}$ is 
    \[
        \tilde{F}(f_n,\ldots,f_{k+1})\times \tilde{F}(f_{k},\ldots,f_1)
    \]
    and whose restriction to $[0,1]^{n-k-1}\times \{1\}\times [0,1]^{k-1}$ is 
    \[
        \tilde{F}(f_n,\ldots,f_{k+1}\circ f_{k},\ldots,f_1).
    \]
\end{enumerate} 

When the families of box maps $\tilde{F}(f_n,\ldots,f_1)$ are only defined for $n\leq \ell$, we say $F$ is an \emph{$\ell$-partial $k$-spatial refinement}.
\end{definition}

\begin{proposition}\cite[Prop.~5.22]{LLS20}\label{prop:2} Consider a small category $\cC$ such that every sequence of composable nonidentity morphisms has length at most $n$, and let $F \colon \cC \to~\smallburnside$. 
    \begin{enumerate}
	   \item If $k\geq n$ then there is a $k$-spatial refinement of $F$.
	   \item If $k\geq n+1$ then any two $k$-spatial refinements of $F$ are homotopic (as homotopy coherent diagrams). 
	   \item If $\tilde{F}_k$ is a $k$-spatial refinement of $F$ then the result of suspending each $\tilde{F}_k(u)$ and $\tilde{F}_k(f_1,\ldots,f_n)(\vec{t})$ gives a $(k+1)$-spatial refinement of $F$.
    \end{enumerate}
\end{proposition}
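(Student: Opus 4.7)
The plan is to prove all three parts by a common inductive strategy on the length $\ell$ of composable sequences, leveraging the key geometric fact recalled just before the proposition: the space $E$ of $k$-dimensional box maps refining a given correspondence is $(k-2)$-connected. By the identity axioms of Definition~\ref{definition:spatref}, the value $\tilde F(f_\ell,\ldots,f_1)$ on a sequence containing some $f_i=\Id$ is forced by the value on the shorter sequence obtained by deleting $f_i$; hence it suffices to construct the refinement on sequences of nonidentity morphisms, of which by hypothesis there are none of length greater than $n$.

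For (1), I would induct on $\ell$. The case $\ell=1$ only requires choosing an arbitrary box map refining each correspondence $F(f)$, which is possible because $E$ is nonempty for $k\geq 1$. Assume an $(\ell-1)$-partial $k$-spatial refinement has been fixed and let $u_0\to\cdots\to u_\ell$ be a sequence of nonidentity morphisms. The conditions in Definition~\ref{definition:spatref}(3) prescribe $\tilde F(f_\ell,\ldots,f_1)$ on the facets $\{t_i=0\}$ of $[0,1]^{\ell-1}$ as a concatenation of two shorter refinements and on the facets $\{t_i=1\}$ as the refinement of the sequence obtained by composing two adjacent arrows. The $2$-morphism data $F_{u,v,w}$ of the strictly unitary lax $2$-functor $F$, together with the inductive hypothesis, ensure that these prescriptions agree on the codimension-two intersections $\{t_i=\epsilon\}\cap\{t_j=\epsilon'\}$, so they assemble into a continuous map $\partial[0,1]^{\ell-1}\simeq S^{\ell-2}\to E$, where $E$ is the space of box maps refining the composite correspondence from $F(u_0)$ to $F(u_\ell)$. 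Since $E$ is $(k-2)$-connected and $\ell-2\leq n-2\leq k-2$, this boundary map extends to $[0,1]^{\ell-1}$, completing the inductive step.

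For (2), one sets up a homotopy between two $k$-spatial refinements $\tilde F^0$ and $\tilde F^1$ as a family of refinements over $[0,1]\times[0,1]^{\ell-1}$, equal to $\tilde F^0$ on $\{0\}\times[0,1]^{\ell-1}$ and to $\tilde F^1$ on $\{1\}\times[0,1]^{\ell-1}$. The same inductive argument as in (1) applies with one extra parameter, so the inductive step now extends a continuous map $S^{\ell-1}\to E$ over a disk, which succeeds whenever $E$ is $(\ell-1)$-connected; the hypothesis $k\geq n+1$ gives $k-2\geq n-1\geq \ell-1$, as required. For (3), the suspension of a rigid embedding $e\colon[0,1]^k\times A\hookrightarrow[0,1]^k\times X$ by $\Id_{[0,1]}$ is a rigid embedding $[0,1]^{k+1}\times A\hookrightarrow[0,1]^{k+1}\times X$ refining the same correspondence, and all boundary and coherence identifications of Definition~\ref{definition:spatref} are preserved because suspension is functorial on box maps; this part is essentially formal.

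The main obstacle I anticipate is the combinatorial bookkeeping in the inductive step of (1): verifying that the data prescribed on the various facets of $[0,1]^{\ell-1}$ match on their codimension-two intersections. This reduces to the commutativity of the coherence square in the definition of a cube in the big Burnside category, together with the compatibility already ensured by the inductive hypothesis; there is no genuine homotopical difficulty once the connectivity estimate for $E$ is in hand.
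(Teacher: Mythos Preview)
Your proposal is correct and matches the approach the paper attributes to \cite{LLS20}: the paper does not reprove Proposition~\ref{prop:2} but only records that its proof ``implicitly uses a recursive application'' of Lemma~\ref{lemma:recursion}, i.e., the induction on $\ell$ you carry out. Your sketch in fact unpacks that lemma, supplying the connectivity count that makes the extension over $[0,1]^{\ell-1}$ (respectively $[0,1]^{\ell}$ for part~(2)) possible, so you have written out precisely the argument the paper is pointing to.
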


The proof of the above proposition implicitly uses a recursive application of the following lemma.

\begin{lemma}\label{lemma:recursion} If $k\geq \ell$ and $\tilde{F}$ is an $(\ell-1)$-partial $k$-spatial refinement of $F$, then there is an $\ell$-partial $k$-spatial refinement of $F$ extending $\tilde{F}$.
\end{lemma}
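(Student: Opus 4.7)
The plan is to extend the data one $\ell$-sequence at a time. Fix a sequence $u_0\xrightarrow{f_1}\cdots\xrightarrow{f_\ell}u_\ell$ of composable morphisms in $\cC$. If some $f_i$ is the identity, the value of $\tilde{F}(f_\ell,\ldots,f_1)$ is entirely forced by the degeneracy relations in Definition \ref{def:homotopycoherentdiag} from the $(\ell-1)$-partial data already at hand, so no choice has to be made; I therefore concentrate on sequences all of whose entries are non-identity morphisms.

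First I would analyse the prescription that the $(\ell-1)$-partial refinement imposes on the boundary of the parameter cube $[0,1]^{\ell-1}$. Condition (3) of Definition \ref{definition:spatref} forces the value of $\tilde{F}(f_\ell,\ldots,f_1)$ on each codimension-one face: on $\{t_i=0\}$ it must equal the horizontal composition $\tilde{F}(f_\ell,\ldots,f_{i+1})\times \tilde{F}(f_i,\ldots,f_1)$, and on $\{t_i=1\}$ it must equal $\tilde{F}(f_\ell,\ldots,f_{i+1}\circ f_i,\ldots,f_1)$; each of these is already defined on the relevant face. One must then check that on any face of codimension at least two the two prescriptions obtained by restricting from different codimension-one faces coincide. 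This follows from the compatibility relations already satisfied by $\tilde{F}$, together with the associativity of horizontal composition and the structure $2$-morphisms $F_{u,v,w}$ of the lax functor $F$. The prescriptions therefore glue into a continuous map $\partial[0,1]^{\ell-1}\lra E$, where $E$ denotes the space of $k$-dimensional box maps refining the correspondence $F(f_\ell\circ\cdots\circ f_1)$, with iterated horizontal compositions identified with this correspondence via the $F_{u,v,w}$.

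The extension is then a standard obstruction-theoretic argument. The boundary $\partial[0,1]^{\ell-1}$ is homeomorphic to the sphere $S^{\ell-2}$, and $E$ is $(k-2)$-connected by the property of box-map spaces recalled at the start of Section \ref{SectionRealizationBurnside}. The hypothesis $k\geq \ell$ gives $\ell-2\leq k-2$, so the prescribed map on the boundary extends across $D^{\ell-1}\cong[0,1]^{\ell-1}$, producing the required $\tilde{F}(f_\ell,\ldots,f_1)$. Performing this extension once for each $\ell$-sequence of non-identity morphisms yields an $\ell$-partial $k$-spatial refinement extending $\tilde{F}$.

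The main obstacle I expect is the coherence check for the boundary data: one must verify that, on every face of $[0,1]^{\ell-1}$ of codimension at least two, the formulas produced by the $(\ell-1)$-partial refinement on the various codimension-one faces that meet there genuinely agree. This amounts to a careful bookkeeping of the lax $2$-morphisms and the degeneracy relations, and is the only step with any content; the connectivity-plus-extension step itself is routine once the prescription on $\partial[0,1]^{\ell-1}$ has been shown to be coherent.
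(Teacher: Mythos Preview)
Your proposal is correct and is precisely the obstruction-theoretic argument that the paper has in mind: the lemma is not proved in the paper itself but is stated as the recursive step implicit in \cite[Prop.~5.22]{LLS20}, and your extension of the boundary datum on $\partial[0,1]^{\ell-1}\cong S^{\ell-2}$ across the cube using the $(k-2)$-connectivity of the space of box maps is exactly that argument. Your identification of the codimension-two coherence check as the only substantive point is also accurate.
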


We are ready now to describe the realization $|F|$ of a cube $F \colon \cube{n} \to \mathscr{B}$ following~\cite{LLS20}: Consider the extended cube category $\cubeplus{n}$ and choose a $k$-spatial refinement $\tilde{F}$ of $F$ for some $k>n$ (this can be done by Proposition \ref{prop:2}(1)). The next step is to extend $\tilde{F}$ to a homotopy coherent diagram $\tilde{F}_*$ indexed by $\cubeplus{n}$ by setting $\tilde{F}(*)$ to be the basepoint and $\tilde{F}(\varphi_{u,*})$ the constant map. If we denote by $\|\tilde{F}\|$ the homotopy colimit of the homotopy coherent diagram $\tilde{F}_*$, the geometric realization of $F$ is defined as $$|F| = \Sigma^{-k}\Sigma^\infty\|\tilde{F}\|,$$ the $k$-fold desuspension of the suspension spectrum of $\|\tilde{F}\|$. 

\begin{theorem}\cite[Section 5.3]{LLS20}
The realization $|F|$ of a cube in the Burnside category is a \CW-spectrum well-defined up to homotopy. Moreover, any natural transformation between cubes in the Burnside category induces a map between \CW-spectra well-defined up to homotopy. 
\end{theorem}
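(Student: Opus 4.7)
The plan is to follow the strategy of \cite{LLS20} and assemble the conclusion from Proposition~\ref{prop:2}. First I would verify that the spectrum $|F|$ is a \CW-spectrum for a fixed choice of refinement: pick any $k>n$ and invoke Proposition~\ref{prop:2}(1) to obtain a $k$-spatial refinement $\tilde F$; extend it to $\tilde F_\ast$ on $\cubeplus{n}$ by sending $\ast$ to the basepoint and each $\varphi_{u,\ast}$ to the constant box map. The Vogt presentation~\eqref{hocolim} writes $\|\tilde F\|$ as a quotient of a disjoint union of products $[0,1]^m\times (F(u_0)_+\wedge S^k)$, and since every $F(u)$ is a finite set and the gluing identifications are cellular, $\|\tilde F\|$ inherits a finite CW-structure, whence $|F|=\Sigma^{-k}\Sigma^\infty\|\tilde F\|$ is a \CW-spectrum.

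The second step is independence of the choice of refinement, up to homotopy. Given two $k$-spatial refinements $\tilde F^0$ and $\tilde F^1$ with $k>n$, Proposition~\ref{prop:2}(2) (which applies since $k\geq n+1$) produces a homotopy of homotopy coherent diagrams between them. Such a homotopy is itself a $k$-spatial refinement over $\cube{1}\times \cube{n}$ extending $\tilde F^0$ and $\tilde F^1$ on the two boundary faces, and passing to Vogt homotopy colimits yields a homotopy equivalence $\|\tilde F^0\|\simeq \|\tilde F^1\|$. To compare refinements at different levels $k$ and $k'$, I would iterate Proposition~\ref{prop:2}(3), whose suspension compatibility is precisely matched by the shift $\Sigma^{-k}$ in the definition of $|F|$.

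The third step is functoriality under a natural transformation $\eta\colon F\Rightarrow G$. Such an $\eta$ is the same data as a strictly unitary lax $2$-functor $\tilde\eta\colon \cube{1}\times \cube{n}\to \smallburnside$ whose restrictions to $\{1\}\times \cube{n}$ and $\{0\}\times \cube{n}$ recover $F$ and $G$. The indexing category has composable chains of length at most $n+1$, so for $k>n+1$ Proposition~\ref{prop:2}(1) yields a $k$-spatial refinement of $\tilde\eta$; using Lemma~\ref{lemma:recursion} I first fix the previously chosen refinements on the two faces and then extend them to the whole product. Its Vogt realization receives a cofibration from $\|\tilde F\|$ and retracts onto $\|\tilde G\|$, producing the desired map $|F|\to |G|$. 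Another application of Proposition~\ref{prop:2}(2), this time to $\tilde\eta$ on $\cube{1}\times \cube{1}\times \cube{n}$, shows that any two such choices give homotopic maps.

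The main obstacle I anticipate is the combinatorial bookkeeping needed to verify that the Vogt homotopy colimit truly inherits a CW-structure with cellular attaching data, and that partial refinements prescribed on the boundary faces of $\cube{1}\times \cube{n}$ can always be extended over the interior. The latter is precisely what Lemma~\ref{lemma:recursion} provides, so the essential content lies in the cellular analysis, after which the rest of the argument reduces to a systematic invocation of Proposition~\ref{prop:2}.
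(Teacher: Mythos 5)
Your proposal is correct and follows essentially the same route as the paper (and as the cited source \cite{LLS20}): existence and essential uniqueness of spatial refinements via Proposition~\ref{prop:2}, compatibility with suspension to handle the desuspension $\Sigma^{-k}$, and functoriality by viewing a natural transformation as a cube of one higher dimension whose refinement is built with Lemma~\ref{lemma:recursion} and realized as an inclusion followed by a deformation retraction of Vogt homotopy colimits. This is precisely the template the paper itself reuses in Sections~\ref{ss:3.2}--\ref{ss:3.4}, so no further comparison is needed.
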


\subsection{The realization of a cube in the locally finite Burnside category}\label{ss:3.2}

Let $f\colon X\to Y$ be a correspondence given as $X\overset{s}{\leftarrow} A\overset{t}{\to} Y$ and $x\in X$. Define the \emph{cosupport of $x$ under $f$} as the subset
\[
    \cosupp(x,f) = t(s^{-1}(x))\subset Y.
\]
An inclusion $\iota\colon X'\subset X$ gives rise to a correspondence $X'\xleftarrow{\id} X' \xrightarrow{\iota} X$ that we denote by $\iota$ too. Write $f|_{X'}$ for the restriction of $f$ to $X'$, i.e., for the composition of correspondences $f\circ \iota$. 

Let $\cC$ be a small category and $F\colon \cC\to \bigburnside$ a strictly unitary lax $2$-functor. If $u_0\overset{f_1}{\to} u_1\overset{f_2}{\to} \ldots \overset{f_\ell}{\to} u_\ell$ is a sequence of composable morphisms in $\cC$ and $x\in F(u_0)$, define a functor $F_x(f_1,\ldots,f_\ell)\colon \cC\to \smallburnside$ whose value on objects is 
\[
	F_x(u) = \begin{cases}
	\{y\in F(u_i)\mid y\in \cosupp(u,F(f_i\circ\ldots\circ f_1))\} & \text{if $u = u_i$,} \\
	\emptyset & \text{if $u\neq u_i$ for all $i$,}
\end{cases}
\]
and whose value on a morphism $f\colon u\to v$ in $\cC$ is $F_x(f) = F(f)|_{F_x(u)}$.

A \emph{($\ell$-partial) $k$-spatial refinement} of a functor $F\colon \cC\to \bigburnside$ is defined as in Definition \ref{definition:spatref}, replacing the Burnside category by the locally finite Burnside category.

\begin{lemma}\label{lemma:recursiveb} Let $F\colon \cC\to \bigburnside$ be a strictly unitary lax $2$-functor. If $k\geq \ell$ and $\tilde{F}$ is an $(\ell-1)$-partial $k$-spatial refinement of $F$, then there is an $\ell$-partial $k$-spatial refinement of $F$ extending $\tilde{F}$.
\end{lemma}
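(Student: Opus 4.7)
The plan is to mimic the proof of the finite-case Lemma \ref{lemma:recursion} from \cite[Prop.~5.22]{LLS20}, whose inductive mechanism rests on the $(k-2)$-connectivity of the space of $k$-dimensional box maps refining a given correspondence. The central observation is that this connectivity persists when passing from finite to locally finite correspondences; once this is established, the remainder of the argument transcribes essentially verbatim.

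First, I would extend the notion of the space $E(s)$ of $k$-dimensional box maps to a locally finite correspondence $X\xleftarrow{s}A\xrightarrow{t}Y$. Because a box map is required to cover $s$, the underlying rigid embedding $e\colon [0,1]^k\times A\to [0,1]^k\times X$ factors fiberwise over $X$, and I would take
\[
E(s) \;=\; \prod_{x\in X}E_x(s),
\]
equipped with the product topology, where $E_x(s)$ is the space of rigid embeddings of $[0,1]^k\times s^{-1}(x)$ into $[0,1]^k\times\{x\}$. Local finiteness of $s$ ensures that each $s^{-1}(x)$ is finite, so every factor $E_x(s)$ is the box-map space of a finite correspondence and is therefore $(k-2)$-connected by the finite case discussed before Definition \ref{definition:spatref}. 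Since homotopy groups of a product are products of homotopy groups, $E(s)$ itself is $(k-2)$-connected.

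With this in hand, the extension step proceeds exactly as in \cite[Prop.~5.22]{LLS20}. Fix a length-$\ell$ sequence $u_0\xrightarrow{f_1}\cdots\xrightarrow{f_\ell}u_\ell$ of composable nonidentity morphisms in $\cC$; the constraints of Definition \ref{definition:spatref}(3) involving identity morphisms determine the refinement on the remaining sequences tautologically. The $(\ell-1)$-partial data $\tilde{F}$ together with the other boundary-face conditions assemble into a continuous map
\[
\partial[0,1]^{\ell-1}\;\longrightarrow\;E\bigl(F(f_\ell\circ\cdots\circ f_1)\bigr),
\]
and since $\partial[0,1]^{\ell-1}\simeq S^{\ell-2}$ while the target is $(k-2)$-connected with $k-2\geq \ell-2$, this map extends to the full cube $[0,1]^{\ell-1}$. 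Making such a choice for each length-$\ell$ sequence yields the desired $\ell$-partial $k$-spatial refinement extending $\tilde{F}$.

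The main obstacle I anticipate lies in the bookkeeping around the product definition: one must check that the collapse map associated to an element of $\prod_x E_x(s)$ still assembles into a continuous map $X_+\wedge S^k\to Y_+\wedge S^k$, and that the boundary-face constraints for composable sequences of correspondences translate correctly into compatible conditions on the product. Both points should follow formally from the finite case applied fiber-by-fiber, but they are the places where the shift from finite to infinite $X$ could cause trouble.
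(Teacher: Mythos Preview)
Your argument is correct and rests on the same idea as the paper's proof: decompose over elements $x$ of the source set, where local finiteness makes each piece finite. The paper packages this by introducing, for each $x\in F(u_0)$, the finite sub-diagram $F_x(f_1,\ldots,f_\ell)$ supported on the cosupport of $x$, applying the finite-case Lemma~\ref{lemma:recursion} to it as a black box, and then setting $\tilde{F}(f_1,\ldots,f_\ell)=\bigvee_{x} \tilde{F}_x(f_1,\ldots,f_\ell)$; your product decomposition $E(s)=\prod_{x} E_x(s)$ is the space-level expression of the same splitting.
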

\begin{proof}
Let $u_0\overset{f_1}{\to} u_1\overset{f_2}{\to} \ldots \overset{f_\ell}{\to} u_\ell$ be a sequence of $\ell$ composable morphisms in $\cC$. For each $x\in F(u_0)$, the functor $F_x(f_1,\ldots,f_\ell)$ lies in the Burnside category $\smallburnside$. By Lemma \ref{lemma:recursiveb}, we can extend $F_x(f_1,\ldots,f_\ell)$ to a spatial refinement on $f_1,\ldots,f_\ell$. We define $\tilde{F}(f_1,\ldots,f_\ell) = \bigvee_{x\in F(u_0)} \tilde{F}_x(f_1,\ldots,f_\ell)$.
\end{proof}

The following result is an extension of Proposition \ref{prop:2} to the setting of the locally finite Burnside category. It will be crucial in the realization of the cubes in this category. 

\begin{proposition}\label{prop:spref}
Consider $\cC$ a small category such that every sequence of composable nonidentity morphisms has length at most $n$, and let $F \colon \cC \to \bigburnside$. 
\begin{enumerate}
	\item If $k\geq n$ then there is a $k$-spatial refinement of $F$.
	\item If $k\geq n+1$ then any two $k$-spatial refinements of $F$ are homotopic (as homotopy coherent diagrams). 
	\item If $\tilde{F}_k$ is a $k$-spatial refinement of $F$ then the result of suspending each $\tilde{F}_k(u)$ and $\tilde{F}_k(f_1,\ldots,f_n)(\vec{t})$ gives a $(k+1)$-spatial refinement of $F$.
\end{enumerate}
\end{proposition}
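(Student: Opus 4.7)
My plan is to reduce each of the three claims to its finite counterpart in Proposition~\ref{prop:2} via the cosupport decomposition of Lemma~\ref{lemma:recursiveb}. For part~(1), I would argue inductively on $\ell$ from $0$ to $n$: take the tautological $0$-partial refinement $\tilde F(u) = F(u)_+ \wedge S^k$ as the base case and invoke Lemma~\ref{lemma:recursiveb} to extend an $(\ell-1)$-partial $k$-spatial refinement to an $\ell$-partial one. The hypothesis $k \geq \ell$ holds at every stage because $k \geq n$, and after $n$ steps the refinement is complete, since every composable chain of length larger than $n$ contains an identity morphism and so is forced by the degeneracy clauses of Definition~\ref{definition:spatref}.

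Part~(3) is immediate: smashing a $k$-dimensional box map $g\colon X_+ \wedge S^k \to Y_+ \wedge S^k$ with $S^1$ yields a $(k+1)$-dimensional box map refining the same correspondence (take the product of the underlying rigid embedding with the identity on the new coordinate of $[0,1]^{k+1}$), and all coherence relations in Definition~\ref{definition:spatref} are preserved by the functor $-\wedge S^1$.

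For part~(2) I would run a standard cylinder argument: given two $k$-spatial refinements $\tilde F_0, \tilde F_1$ with $k \geq n+1$, form the functor $F'\colon \cC \times \cube{1} \to \bigburnside$ that agrees with $F$ on each slice $\cC \times \{i\}$ and assigns the identity correspondence to the nontrivial edge of $\cube{1}$. Composable chains of nonidentity morphisms in $\cC \times \cube{1}$ have length at most $n+1$, and $\tilde F_0 \sqcup \tilde F_1$ already constitutes a $k$-spatial refinement of $F'$ restricted to $\cC \times \partial \cube{1}$. A relative version of the inductive scheme from part~(1)---applying Lemma~\ref{lemma:recursiveb} only to chains that cross the new edge---extends this boundary data to a $k$-spatial refinement of $F'$, which is by definition a homotopy between $\tilde F_0$ and $\tilde F_1$ as homotopy coherent diagrams.

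The main obstacle will be justifying that the wedge construction $\bigvee_{x \in F(u_0)} \tilde F_x(f_1,\ldots,f_\ell)$ inside Lemma~\ref{lemma:recursiveb} really defines a box map refining the (possibly infinite) composite correspondence. Local finiteness is essential here: the cosupport of each $x$ is finite, so the rigid embeddings produced for each finite-Burnside refinement $\tilde F_x$ can be housed in disjoint slabs $[0,1]^k \times \{x\} \subset [0,1]^k \times F(u_0)$ without collision, and continuity of the resulting parameterised family in the compact-open topology follows because any compact subset of $F(u_0)_+ \wedge S^k$ meets only finitely many summands $\{x\}_+ \wedge S^k$.
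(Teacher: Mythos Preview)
Your proposal is correct and follows essentially the same approach as the paper: part~(1) is handled identically via recursive application of Lemma~\ref{lemma:recursiveb} starting from the tautological data on objects, while for parts~(2) and~(3) the paper simply declares them ``analogous to \cite[Proposition~5.22]{LLS20}'' without spelling out the cylinder or suspension arguments that you give. Your final paragraph on the wedge construction is really a justification of Lemma~\ref{lemma:recursiveb} itself rather than of this proposition, but it is sound and fills in detail that the paper leaves implicit.
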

\begin{proof}
On objects, we set $\tilde{F}(u) = F(u)_+\wedge S^k$. Then we extend the spatial refinement recursively on the length of the chain of composable morphisms using Lemma~\ref{lemma:recursiveb}. This completes the proof of (1). The proof of (2) and (3) is analogous to that in \cite[Proposition 5.22]{LLS20}.
\end{proof}

Given a cube in the locally finite Burnside category $F \colon \cube{n} \to \bigburnside$, define its realization $|F|$ in the same way as described in Section \ref{SectionRealizationBurnside} for the case of the Burnside category. 

\begin{proposition} Let $F\colon \cube{n}\to \bigburnside$ be a strictly unitary lax $2$-functor and $k>n$.
\begin{enumerate}
    \item If $\tilde{F}_1$ and $\tilde{F}_2$ are two $k$-spatial refinements of $F$, then $\|\tilde{F}_1\|\simeq \|\tilde{F}_2\|$. 
    \item If $\tilde{F}$ is a $k$-spatial refinement of $F$, then $\|\Sigma \tilde{F}\|\simeq \Sigma \|\tilde{F}\|$.
    \item If $\eta \colon F\to F'$ is a natural transformation between two cubes $F,F'\colon \cube{n}\to \bigburnside$, and $\tilde{F}$ and $\tilde{F'}$ are $k$-spatial refinements of $F$ and $F'$, respectively,  then $\eta$ induces a map $\|\eta\| \colon \|\tilde{F}\|\to \|\tilde{F'}\|$ well-defined up to homotopy.
\end{enumerate}	
\end{proposition}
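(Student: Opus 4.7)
The plan is to reduce each statement to the corresponding result for the finite Burnside category from \cite{LLS20}, by invoking Proposition~\ref{prop:spref} together with the wedge decomposition $\tilde{F}(f_1,\ldots,f_\ell)=\bigvee_{x\in F(u_0)}\tilde{F}_x(f_1,\ldots,f_\ell)$ from the proof of Lemma~\ref{lemma:recursiveb}, which expresses every datum of a spatial refinement as a wedge of data taking values in the finite Burnside category. For (1), Proposition~\ref{prop:spref}(2) says $\tilde{F}_1$ and $\tilde{F}_2$ are homotopic as homotopy coherent diagrams, which I interpret concretely as a $k$-spatial refinement $\tilde{H}$ on the cylinder category $\cube{n}\times\cube{1}$ restricting to $\tilde{F}_1$ and $\tilde{F}_2$ at the two ends. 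In the Vogt model \eqref{hocolim} of the homotopy colimit, each of the inclusions of the two endpoint copies of $\cube{n}$ into $\cube{n}\times\cube{1}$ induces an equivalence $\|\tilde{F}_i\|\to\|\tilde{H}\|$ (the collapse of the cylinder coordinate), yielding the homotopy equivalence $\|\tilde{F}_1\|\simeq\|\tilde{F}_2\|$.

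For (2), formula \eqref{hocolim} presents $\|\tilde{F}\|$ as a quotient of a wedge of spaces of the form $[0,1]^n_+\wedge F(u_0)_+\wedge S^k$. Reduced suspension commutes with arbitrary wedges, with smash products by the circle, and with quotients by pointed subspaces, so applying $\Sigma$ to the presentation of $\|\tilde{F}\|$ reproduces exactly the presentation of $\|\Sigma\tilde{F}\|$, giving a natural homeomorphism $\Sigma\|\tilde{F}\|\cong\|\Sigma\tilde{F}\|$.

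For (3), a natural transformation $\eta\colon F\Rightarrow F'$ assembles into a strictly unitary lax $2$-functor $H_\eta\colon\cube{n+1}=\cube{n}\times\cube{1}\to\bigburnside$ whose restrictions to the two copies of $\cube{n}$ are $F$ and $F'$. The pair $\tilde{F}\sqcup\tilde{F'}$ forms a partial $k$-spatial refinement of $H_\eta$ on $\cube{n}\times\partial\cube{1}$. Since $k>n$ gives $k\geq n+1$, I would run the recursive step of Lemma~\ref{lemma:recursiveb} on chains of composable morphisms not already refined (i.e.\ those involving the new cylinder coordinate) to extend $\tilde{F}\sqcup\tilde{F'}$ to a full $k$-spatial refinement $\tilde{H}_\eta$ of $H_\eta$. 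The Vogt homotopy colimit of $\tilde{H}_\eta$ then contains a map $\|\eta\|\colon\|\tilde{F}\|\to\|\tilde{F'}\|$ obtained by tracking the cylinder direction. Independence of this map up to homotopy follows by applying (1) to two such extensions of $\tilde{F}\sqcup\tilde{F'}$; if $k=n+1$, one first suspends once, uses (2) to transport, and works in the range $k\geq n+2$ where Proposition~\ref{prop:spref}(2) directly applies to the $(n+1)$-cube.

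The main obstacle I expect is in the existence half of (3): Proposition~\ref{prop:spref}(1) produces a refinement of $H_\eta$ from scratch, but I need to extend an already-prescribed partial refinement on the boundary $\cube{n}\times\partial\cube{1}$. This demands a slight strengthening of the recursion in Lemma~\ref{lemma:recursiveb}, allowing it to start from any partial refinement defined on a downward-closed subcategory, and requires checking that the wedge decomposition indexed by $x\in F(u_0)$ is compatible with the prescribed restrictions on the boundary cubes. Once this compatibility is verified, each wedge summand becomes a standard finite Burnside extension problem to which the LLS obstruction argument applies verbatim.
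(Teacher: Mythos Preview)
Your approach is essentially the same as the paper's: both derive (1) and (2) from Proposition~\ref{prop:spref}(2) and (3), and both prove (3) by regarding $\eta$ as an $(n+1)$-cube and invoking Lemma~\ref{lemma:recursiveb} to extend the given refinements $\tilde{F},\tilde{F'}$ over the cylinder. The paper's proof is extremely terse---it simply cites these results without further argument---so your expansion is faithful to the same strategy.

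Two points where you are more careful than the paper deserve comment. First, for (2) you give a direct argument that suspension commutes with the Vogt presentation~\eqref{hocolim}, whereas the paper only says ``(2) is a consequence of Proposition~\ref{prop:spref}(3)''; the latter identifies $\Sigma\tilde{F}$ as a $(k+1)$-spatial refinement but does not by itself explain the equivalence $\|\Sigma\tilde{F}\|\simeq\Sigma\|\tilde{F}\|$, so your argument actually fills a small gap. Second, the ``obstacle'' you flag in (3)---that Lemma~\ref{lemma:recursiveb} is stated for partial refinements filtered by chain length rather than by a subcategory---is a genuine subtlety the paper passes over silently when it writes ``By Lemma~\ref{lemma:recursiveb}, the spatial refinements $\tilde{F}$ and $\tilde{F'}$ can be extended\ldots''. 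Your proposed fix (run the recursion only on chains touching the cylinder coordinate, using the wedge decomposition over $x\in F(u_0)$) is exactly what is needed and is straightforward, since the boundary data on $\partial[0,1]^{\ell-1}$ for a crossing chain are built from shorter chains which are either already refined (in the faces) or handled at an earlier stage of the recursion. Likewise your observation that well-definedness in (3) may require one suspension when $k=n+1$ is correct and again not addressed in the paper's proof.
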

\begin{proof}
(1) and (2) are consequences of Proposition \ref{prop:spref} (2) and (3), respectively. To prove (3), observe that a natural transformation from $F$ to $F'$ is a (strictly unitary lax) 2-functor $\underline{\eta} \colon \cube{n+1}\to \bigburnside$ that restricts to $F$ and $F'$ on $\cube{n}\times \{1\}$ and $\cube{n}\times \{0\}$, respectively. By Lemma \ref{lemma:recursiveb}, the spatial refinements $\tilde{F}$ and $\tilde{F'}$ can be extended to a spatial refinement of $\underline{\eta}$. This spatial refinement yields a map from the coherent diagram $\tilde{F}$ to the coherent diagram $\tilde{F'}$ that commutes up to homotopy, which in turn yields a map between the \CW-complexes $\|\tilde{F}\|$ and $\|\tilde{F'}\|$.
\end{proof}

\begin{corollary}
The realization $|F|$ of a cube in the locally finite Burnside category $\bigburnside$ is a \CW-spectrum well-defined up to homotopy. Moreover, any natural transformation between cubes in the locally finite Burnside category induces a well-defined map up to homotopy. 
\end{corollary}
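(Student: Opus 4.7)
The plan is to deduce this corollary almost directly from the preceding proposition by following the same packaging argument as in \cite{LLS20}. First, I would check existence: by Proposition \ref{prop:spref}(1) applied to $\cC=\cubeplus{n}$, whose chains of composable nonidentity morphisms have length at most $n+1$, any cube $F\colon \cube{n}\to \bigburnside$ admits a $k$-spatial refinement $\tilde{F}_*$ for every $k> n$, after extending $F$ to the extended cube $\cubeplus{n}$ by sending the terminal object $*$ to a point and each $\varphi_{u,*}$ to the zero correspondence. Consequently $|F|=\Sigma^{-k}\Sigma^\infty\|\tilde{F}_*\|$ is defined. To see that $|F|$ is a \CW-spectrum, I would note that in the Vogt model \eqref{hocolim} each piece $[0,1]^m\times F(u_0)_+\wedge S^k$ is a CW-complex (a disjoint union of $F(u_0)$-indexed disks-times-spheres), and the identifications in $\sim$ glue along CW-subcomplexes, so $\|\tilde{F}_*\|$ has a natural CW-structure and its suspension spectrum (desuspended $k$ times) is a \CW-spectrum.

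Next, I would address well-definedness. Independence of the choice of spatial refinement, for a fixed $k>n$, is exactly part (1) of the preceding proposition, which already gives $\|\tilde{F}_1\|\simeq \|\tilde{F}_2\|$ and hence a homotopy equivalence of the resulting spectra. Independence of $k$ follows from part (2) together with the standard identity $\Sigma^{-(k+1)}\Sigma^\infty \Sigma X\simeq \Sigma^{-k}\Sigma^\infty X$: replacing a $k$-spatial refinement $\tilde{F}$ by its suspension (which is a $(k+1)$-spatial refinement) gives $\Sigma^{-(k+1)}\Sigma^\infty\|\Sigma\tilde{F}_*\|\simeq \Sigma^{-(k+1)}\Sigma^\infty\Sigma\|\tilde{F}_*\|\simeq \Sigma^{-k}\Sigma^\infty\|\tilde{F}_*\|$. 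Combining these, any two choices yield homotopy equivalent spectra after enough suspensions, i.e., $|F|$ is well-defined in the homotopy category of \CW-spectra.

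For the statement about natural transformations $\eta\colon F\to F'$, I would invoke part (3) of the preceding proposition directly. It produces, from compatible $k$-spatial refinements of $F$ and $F'$, a map $\|\eta\|\colon\|\tilde{F}\|\to \|\tilde{F'}\|$ well-defined up to homotopy; desuspending $k$ times and passing to suspension spectra yields a map $|\eta|\colon |F|\to |F'|$ well-defined up to homotopy of \CW-spectra. Functoriality up to homotopy, and the fact that the induced map does not depend on the choice of refinement or $k$, follow from the same independence arguments as above applied to the 2-functor $\underline{\eta}\colon \cube{n+1}\to\bigburnside$ encoding $\eta$.

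The only genuine content, as opposed to bookkeeping, is hidden in the preceding proposition and Lemma \ref{lemma:recursiveb}: the main obstacle is that the target sets $F(u)$ can be infinite, so one cannot apply the finite-Burnside theory of \cite{LLS20} directly. This is resolved by the decomposition $\tilde{F}(f_1,\dots,f_\ell)=\bigvee_{x\in F(u_0)}\tilde{F}_x(f_1,\dots,f_\ell)$, which works precisely because local finiteness of the source maps forces each cosupport $F_x(u_i)=\cosupp(x,F(f_i\circ\cdots\circ f_1))$ to be a finite set, reducing each wedge summand to the classical setting. Once that point is accepted (as it is, by Proposition \ref{prop:spref}), the corollary is a formal consequence.
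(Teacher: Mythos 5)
Your argument is correct and matches the paper's intent exactly: the corollary is stated as a formal consequence of Proposition \ref{prop:spref} (existence of spatial refinements) together with parts (1)--(3) of the immediately preceding proposition, and your packaging of these ingredients — independence of the refinement via part (1), independence of $k$ via part (2) and the suspension identity, and the map induced by a natural transformation via part (3) — is the same route the paper takes. Your closing remark correctly identifies that the real content lives in Lemma \ref{lemma:recursiveb} and the wedge decomposition over cosupports, not in this corollary.
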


This completes the proof of Theorem \ref{thm:intro1}.

\subsection{The realization of a cube in the bilocally finite Burnside category} \label{ss:3.3}

We will prove that the realization of cubes in the bilocally finite Burnside category leads to locally compact \CW-spectra. To do this, we first prove a result in the setting of functors over locally finite categories.

A \emph{proper homotopy coherent diagram} is a homotopy coherent diagram in the category of \rlc \CW-complexes and proper maps \CW$_{lc}$ (see Appendix for the definition of this category). A \emph{proper homotopy coherent natural transformation} $\eta\colon F\to F'$ between proper homotopy coherent diagrams $F,F'\colon \cD\to \CW_{lc}$ is a proper homotopy coherent diagram $\underline{\eta}\colon \cD\times \cube{1}\to \CW_{lc}$. A \emph{proper homotopy} $H\colon F\times [0,1]\to F'$ between two proper homotopy natural transformations $\eta,\eta'$ is a proper homotopy coherent natural transformation $\underline{H}\colon \cD\times \cube{1}\to \CW_{lc}$ between $F\times [0,1]$ and $F'$ such that the restriction to $F\times \{1\}$ is $\eta$ and the restriction to $F\times \{0\}$ is $\eta'$.

\begin{lemma}\label{lemma:propervogt} Let $\cC$ be a small category such that for each object $u$ of $\cC$ the comma categories $\cC\downarrow u$ and $u\downarrow \cC$ have finitely many sequences of composable non-identity morphisms. The Vogt homotopy colimit of a proper homotopy coherent diagram $F$ over $\cC$ is a locally compact space. If $\eta\colon F\to F'$ is a level-wise proper natural transformation of proper homotopy coherent diagrams, then $\eta$ induces a proper map between Vogt homotopy colimits. If $\eta$ and $\eta'$ are two naturally properly homotopic natural transformations, then the induced maps between Vogt homotopy colimits are properly homotopic.
\end{lemma}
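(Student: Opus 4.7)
The plan is to use the finiteness hypothesis on $\cC$ to equip the Vogt model (\ref{hocolim}) of $\hocolim F$ with a locally finite cellular structure, and then reduce each of the three claims to a cell-by-cell statement. I begin by establishing that every non-basepoint $x \in \hocolim F$ admits a unique canonical representative $(g_m, \ldots, g_1; s_1, \ldots, s_m; q)$ with all $g_i$ non-identity, all $s_i \in (0,1)$ and $q \in F(u_0)$, by successively applying the identifications of (\ref{hocolim}). Then I enumerate all cells whose closure contains $x$: such a cell is indexed by a sequence $(h_k, \ldots, h_1)$ obtained from $(g_m, \ldots, g_1)$ by (a) refining each morphism $g_j\colon u_{j-1}\to u_j$ as a non-degenerate composition (corresponding to boundary values $r = 1$ in the interior), (b) prepending a non-identity sequence ending at $u_0$ (corresponding to $r = 0$), and (c) appending a non-identity sequence starting at $u_m$ (corresponding to $r = 1$ at the right end). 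The refinements are bounded by the non-identity composable sequences starting at each $u_{j-1}$, the prependings by those in $\cC \downarrow u_0$, and the appendings by those in $u_m \downarrow \cC$, so by hypothesis only finitely many cells have $x$ in their closure.

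To conclude part (1), in each incident cell $[0,1]^k \times F(u'_0)$ the relevant point has coordinates $(\vec{r}, p)$ with $\vec{r}$ on a face of the cube and $p$ a preimage of $q$ under the appropriate coherent structure map; I take a small closed box around $\vec{r}$ (compact) together with a compact neighborhood of $p$ in $F(u'_0)$, which exists by \rlc compactness. These compact boxes are glued along their intersections by the equivalence relation, and the resulting finite union is compact. Since $x$ lies in the interior of this union, it is the desired compact neighborhood of $x$ in $\hocolim F$.

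For part (2), a level-wise proper natural transformation $\eta$ induces a map $|\eta|\colon \hocolim F \to \hocolim F'$ which, on the cell indexed by $(f_n, \ldots, f_1)$, is determined up to higher coherences by $\id_{[0,1]^n} \times \eta_{u_0}$ and is therefore proper cell-by-cell. Given a compact $K \subset \hocolim F'$, covering $K$ by finitely many neighborhoods as in part (1) shows that $K$ meets only finitely many cells; the preimage under $|\eta|$ is contained in the corresponding finitely many cells of $\hocolim F$, and on each of these the restriction of $|\eta|$ is proper, so $|\eta|^{-1}(K)$ is compact. Part (3) then follows by applying part (2) to the proper homotopy coherent natural transformation $\underline{H}$ and using that the Vogt hocolim of $F \times [0,1]$ is naturally homeomorphic to $|F| \times [0,1]$, yielding the sought-after proper homotopy between $|\eta|$ and $|\eta'|$.

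The main obstacle will be the gluing step in part (1): one must verify that the finitely many compact boxes chosen in the incident cells can actually be arranged to match along the identifications of (\ref{hocolim}), so that their union in the quotient topology is genuinely compact rather than merely a finite union of locally compact pieces whose quotient might fail to be Hausdorff near $x$. This amounts to a finite bookkeeping argument controlled by the combinatorics of expansions (a)--(c), but it is precisely where the comma-category hypothesis is indispensable, since without a finite bound on incident cells one could not arrange simultaneous compactness.
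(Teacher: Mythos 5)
Your argument is essentially the paper's: the unique canonical representative with all coordinates in $(0,1)$, the three expansion moves (your (a)--(c) are the paper's (c), (b), (a)), and the comma-category hypotheses bounding each family of expansions are exactly the steps in the paper's proof of local compactness; your cell-by-cell properness check for parts (2) and (3) rests on the same finite-incidence count, whereas the paper factors the induced maps through $\hocolim\underline{\eta}$ and $\hocolim\underline{H}$, which satisfy the same hypotheses over $\cC\times\cube{1}$. The ``main obstacle'' you flag in part (1) is not a genuine gap: the Vogt homotopy colimit is itself a CW-complex whose open cells correspond to the canonical representatives, your incidence count shows this CW-structure is locally finite, and a locally finite CW-complex is automatically Hausdorff and locally compact, so no separate verification that the compact boxes glue is needed---this is in effect how the paper concludes.
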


\begin{proof}
The Vogt homotopy colimit of such a diagram is the quotient of a disjoint union of a basepoint $\ast$ and spaces of the form $[0,1]^m\times F(u_0)$, indexed by sequences $u_0\xrightarrow{f_1}\dots\xrightarrow{f_m}u_m$ of composable non-identity morphisms. Observe first that this is a disjoint union of compact spaces, and therefore is locally compact. 
To show that the quotient is locally compact, notice first that every equivalence class has a unique representative $\mathfrak{g}=(g_\ell,\dots,g_1;s_1,\dots,s_\ell;q)$ such that $(s_1,\dots,s_\ell)\in (0,1)^\ell$ and $q\in F(v_0)$, where $v_0\xrightarrow{g_1}\dots\xrightarrow{g_\ell}v_\ell$. Any other element in this equivalence class is of the form $\mathfrak{f}=(f_m,\dots,f_1;t_1,\dots,t_m;p)$ with $m>\ell$ and $t_{j_i}=s_i$ with $j_{1}< \ldots < j_{\ell}$ and the remaining $t_j$ are equal to either 0 or 1.

In fact, $\mathfrak{f}$ is obtained from $\mathfrak{g}$ by a sequence of the following three transformations, where $\mathfrak{h}=(h_k,\dots,h_1;r_1,\dots,r_k;z)$ (compare to relations in Definition \ref{def:homotopycoherentdiag}):

\begin{itemize}
    \item[(a)] $\mathfrak{h} \, \rightarrow \, (h_{k+1},h_k,\dots,h_1;r_1,\dots,r_k,1;z)$, where $h_{k+1}\colon u_k\rightarrow u_{k+1}$ is an object in $u_k\downarrow\cC$.
    \item[(b)] $\mathfrak{h} \, \rightarrow \, (h_k,\dots,h_1,h_0;0,r_1,\dots,r_k;y)$, where $h_0\colon u_{-1}\rightarrow u_0$ is an object in $\cC\downarrow u_0$ and $F(h_0)(y)=z$.
    \item[(c)] $\mathfrak{h} \, \rightarrow \, (h_k,\dots,h_j^{\prime\prime},h_j^\prime,\dots,h_1;r_1,\dots,1,r_j,\dots,r_k;z)$, where $h_j^\prime\colon u_{j-1}\rightarrow w$, $h_j^{\prime\prime}\colon w\rightarrow u_j$ for some $w$, and $h_j=h_j^{\prime\prime}\circ h_j^\prime$ with $1\leq j \leq k$.
\end{itemize}

The sequence transforming $\mathfrak{g}$ into $\mathfrak{f}$ can be chosen so that transformations of type (a) occur before those of type (b) and transformations of type (c) are the last in the sequence. 

Let $A$ be the set of sequences of transformations of type (a) that can be applied to $\mathfrak{g}$. For each $a\in A$ let $\mathfrak{g}_a$ be the representative defined by $a$ and let $B_a$ be the set of sequences of transformations of type (b) that can be applied to $\mathfrak{g}_a$. Finally, for each $b\in B_a$, let $\mathfrak{g}_{a,b}$ be the representative obtained by applying $b$ to $\mathfrak{g}_a$ and let $C_{a,b}$ be the set of all representatives of $\mathfrak{g}$ that are obtained by applying transformations of type (c) to $\mathfrak{g}_{a,b}$.

Since the comma category $v_\ell\downarrow\cC$ has finitely many sequences of composable non-identity morphisms, $A$ is finite. The same condition over $\cC \downarrow v_0$ implies that $B_a$ is finite. Lastly, since $u\downarrow \cC$ (or $\cC\downarrow u$) has finitely many sequences of composable non-identity morphisms, $C_{a,b}$ is finite.

Since $A, B_a, C_{a,b}$ are all finite, there are finitely many representatives in each class and therefore the quotient is locally compact. 

If $\eta\colon F\rightarrow F^\prime$ is a level-wise proper homotopy coherent natural transformation, we can view it as a proper homotopy coherent diagram $\underline{\eta}\colon\cC\times\cube{1}\rightarrow\Topp$ such that it restricts to $F$ and $F^\prime$ on $\cC\times\{1\}$ and $\cC\times\{0\}$, respectively. There is a proper homotopy equivalence $\hocolim \underline{\eta} \simeq \hocolim F^\prime$ and a proper inclusion  $\hocolim F\to \hocolim \underline{\eta}$. The composition of these two maps is the map between homotopy colimits. 

If $\eta,\eta'\colon F\to F^\prime$ are level-wise proper homotopy coherent natural transformations and $H\colon F\times [0,1]\to F^\prime$ is a proper homotopy coherent diagram that restricts to $\eta$ and $\eta'$ on $F\times \{1\}$ and $F\times \{0\}$, then its Vogt homotopy colimit $\hocolim H$ is properly homotopy equivalent to $\hocolim F^\prime$, and has an inclusion $(\hocolim F)\times [0,1]\to \hocolim H$. Their composition is a proper homotopy equivalence between the maps induced by $\eta$ and $\eta'$ on Vogt homotopy colimits.
\end{proof}

Given a cube in the bilocally finite Burnside category $F \colon \cube{n} \to \lfburnside$, consider its realization $|F|$ in the same way as described for Burnside category in Section~\ref{SectionRealizationBurnside} using the Vogt homotopy colimit \eqref{hocolim}. 

\begin{proposition}\label{lemma:lfCWspectrum}
The realization $|F|$ of a cube in the bilocally finite Burnside category $\lfburnside$ is a locally compact \CW-spectrum well-defined up to proper homotopy. Moreover, any natural transformation between cubes in the bilocally finite Burnside category induces a well-defined map up to proper homotopy.
\end{proposition}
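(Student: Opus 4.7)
The goal is to mirror the construction of Section~\ref{ss:3.2} inside the proper category $\CW_{lc}$ of \rlc \CW-complexes. Since $\lfburnside\subset\bigburnside$, the previous subsection already gives that $|F|$ is a \CW-spectrum well-defined up to homotopy; the new content is local compactness and the upgrade from homotopy to proper homotopy. The key observation is that bilocal finiteness makes box maps proper: a box map refining a bilocally finite correspondence $X\xleftarrow{s}A\xrightarrow{t}Y$ factors as a collapse map $X_+\wedge S^k\to A_+\wedge S^k$ (which is proper because $s$ has finite preimages, so the underlying rigid embedding of $[0,1]^k\times A$ meets each fiber over $x\in X$ in finitely many boxes) followed by $t\wedge \Id\colon A_+\wedge S^k\to Y_+\wedge S^k$ (proper because $t^{-1}(y)$ is finite). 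The same reasoning applies to the families of box maps appearing in any spatial refinement.

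Granted this, I would rerun the recursive construction of Lemma~\ref{lemma:recursiveb} and Proposition~\ref{prop:spref} inside $\CW_{lc}$: each $F_x(f_1,\ldots,f_\ell)$ takes values in the Burnside category $\smallburnside$ by bilocal finiteness, so the standard refinement-extension step produces proper maps and proper homotopies at every stage, and yields a proper homotopy coherent diagram $\tilde{F}_*\colon \cubeplus{n}\to\CW_{lc}$. Since $\cubeplus{n}$ is a finite poset, all of its comma categories are finite, and Lemma~\ref{lemma:propervogt} then applies to conclude that the Vogt homotopy colimit $\|\tilde{F}\|$ is locally compact. Desuspending and stabilizing preserves local compactness, so $|F|=\Sigma^{-k}\Sigma^\infty\|\tilde{F}\|$ is a locally compact \CW-spectrum.

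For the well-definedness up to proper homotopy, two $k$-spatial refinements of $F$ can be extended to a single proper homotopy coherent diagram over $\cubeplus{n}\times\cube{1}$ via the same recursive extension, and a natural transformation $\eta\colon F\to F'$ produces a cube in $\lfburnside$ over $\cube{n+1}$ whose spatial refinement interpolates between the refinements of $F$ and $F'$, unique up to proper homotopy coherent homotopy. The second half of Lemma~\ref{lemma:propervogt} then converts these data into proper homotopy equivalences and proper maps between the Vogt homotopy colimits, hence between the realizations. The main obstacle I anticipate is the bookkeeping task of matching the infinite wedge $X_+\wedge S^k$—whose basepoint fails ordinary local compactness—with the precise definition of $\CW_{lc}$ recalled in the appendix, and verifying that each step of the recursive spatial refinement lands in that category; once this matching is pinned down, the rest of the argument is essentially a formal transport of Section~\ref{ss:3.2} through Lemma~\ref{lemma:propervogt}.
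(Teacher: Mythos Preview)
Your proposal is correct and follows essentially the same route as the paper: verify that bilocal finiteness forces the box maps and their families to be proper, so the spatial refinement is a proper homotopy coherent diagram, then invoke Lemma~\ref{lemma:propervogt} on $\cubeplus{n}$ and use extensions over larger cubes for well-definedness and naturality. The paper is slightly more explicit about the well-definedness step---it builds refinements $\tilde{F}_a,\tilde{F}_b$ over $\cube{n}\times I$ in both directions and then a further refinement over $\cube{n}\times I\times I$ to exhibit the compositions as properly homotopic to the identity---whereas you compress this into a single appeal to Lemma~\ref{lemma:propervogt}; spelling out that two-sided argument would close the only small gap in your sketch.
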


\begin{proof}
A spatial refinement of a cube in $\lfburnside$ maps each vertex to a wedge of spheres, which is a \rlc \CW-complex. An edge is sent to the realization of the spatial refinement of a bilocally finite correspondence $f$, which is a proper map $\tilde{f}\colon X\to Y$, where $X$ and $Y$ are wedges of spheres. Finally, the homotopies that arise from isotopies of box maps are also proper: if $H\colon X\times I\to Y$ is the homotopy constructed between the spatial refinements $\tilde{f},\tilde{g}\colon X\to Y$ of two bilocally finite correspondences $f$ and $g$, and $e$ is an open cell of $Y$, then $H^{-1}(e)$ only meets product cells $e'\times I\subset X\times I$ with $e'$ a cell that meets $f^{-1}(e)$. Since there are finitely many of the latter, $H$ is proper. As a consequence, the spatial refinement built in Proposition \ref{prop:spref} is a proper homotopy coherent diagram. Then, by Lemma \ref{lemma:propervogt}, $\Vert F\Vert$ is a locally compact \CW-complex, hence locally finite, and from Remark~\ref{remark_A1} it follows that $\vert F\vert$ is a locally finite \CW-spectrum.

To prove that the proper homotopy type of $|F|$ is well-defined, suppose that we are given two spatial refinements $\tilde{F}_1$ and $\tilde{F}_2$ and let $I=\cube{1}$ be the poset $(1\rightarrow 0)$.

By Lemma \ref{lemma:recursiveb}, we can extend $\tilde{F}_1$ and $\tilde{F}_2$ to a spatial refinement $\tilde{F}_a\colon\cube{n}\times I\rightarrow\Topp$ that restricts to $\tilde{F}_1$ on $\cube{n}\times \{1\}$ and to $\tilde{F}_2$ on $\cube{n}\times \{0\}$, and is the identity on $\{u\}\times I$. Then, $\tilde{F}_a$ induces a proper map $\Vert\tilde{F}_a\Vert\colon\Vert\tilde{F}_1\Vert\rightarrow\Vert\tilde{F}_2\Vert$.

Similarly, we can produce a spatial refinement $\tilde{F}_b\colon\cube{n}\times I\rightarrow\Topp$ that restricts to $\tilde{F}_2$ on $\cube{n}\times \{1\}$ and to $\tilde{F}_1$ on $\cube{n}\times \{0\}$, and is the identity on $\{u\}\times I$. It induces a proper map $\Vert\tilde{F}_b\Vert\colon \|\tilde{F}_2\|\to \|\tilde{F}_1\|$.

The compositions $\Vert\tilde{F}_b\Vert\circ \Vert\tilde{F}_a\Vert$ and $\Vert\tilde{F}_a\Vert\circ \Vert\tilde{F}_b\Vert$ are induced by the diagrams that result from gluing $\tilde{F}_a$ and $\tilde{F}_b$, and $\tilde{F}_b$ and $\tilde{F}_a$, respectively. Using once more Lemma~\ref{lemma:recursiveb}, define another spatial refinement $\tilde{F}\colon \cube{n}\times I\times I\rightarrow\Topp$ that restricts to $\tilde{F}_a\circ \tilde{F}_b$ on $\cube{n}\times I\times \{1\}$ and to the identity on $\cube{n}\times I\times \{0\}$, with arrows $\{u\}\times\{i\}\times I$ being identities. 

Then $\tilde{F}$ induces a proper homotopy between $\Vert\tilde{F}_b\Vert\circ \Vert\tilde{F}_a\Vert$ and the identity. In a similar way we obtain a proper homotopy between $\Vert\tilde{F}_a\Vert\circ \Vert\tilde{F}_b\Vert$ and the identity, so $\Vert\tilde{F}_1\Vert$ and $\Vert\tilde{F}_2\Vert$ are proper homotopy equivalent. By Remark \ref{remark_A1}, $\vert F\vert$ is well-defined up to proper homotopy type.

Consider now a natural transformation $\eta\colon F\rightarrow F^\prime$ between cubes in the bilocally finite category $F,F^\prime\colon\cube{n}\rightarrow\lfburnside$, and let $\tilde{F}$ and $\tilde{F^\prime}$ be two spatial refinements of $F$ and $F^\prime$, respectively. Then $\eta$ induces a map between the spatial refinements $\tilde{\eta}\colon\tilde{F}\rightarrow\tilde{F^\prime}$, as well as between the \CW-complexes, $\Vert\tilde{\eta}\Vert\colon\Vert\tilde{F}\Vert\rightarrow\Vert\tilde{F^\prime}\Vert$.

On the other hand, we can see $\eta$ as a $(n+1)$-dimensional cube $\eta\colon\cube{n}\times I\rightarrow\lfburnside$ that restricts to $F$ on $\cube{n}\times\{1\}$ and to $F^\prime$ on $\cube{n}\times\{0\}$, and $\tilde{\eta}$ as a spatial refinement extending $\tilde{F}$ and $\tilde{F^\prime}$ given by Lemma \ref{lemma:recursiveb}. By the second part of Lemma \ref{lemma:propervogt}, $\Vert\tilde{\eta}\Vert$ is proper. By Remark \ref{remark_A1}, $\eta$ induces a well-defined map up to proper homotopy between the realizations $\vert F\vert$ and $\vert F^\prime\vert$.
\end{proof}

This completes the proof of Theorem \ref{thm:intro2}.

\subsection{The realization of a cube in the equivariant Burnside category}\label{ss:3.4}

Given a cube in the $G$-equivariant Burnside category $F\colon\cube{n}\rightarrow\gburnside{G}$, we can construct a spatial refinement $\tilde{F}_G$ of the quotient $F/G$, and then define an equivariant spatial refinement $\tilde{F}$ of $F$ by setting $\tilde{F}(u) = F(u)_+\wedge S^k$ and then defining the maps $\tilde{F}(u)\to \tilde{F}(v)$ as the only refinements that make the following diagram commute:
\[
    \xymatrix{
    \tilde{F}(u)\ar[d]\ar@{-->}[r] & \tilde{F}(v)\ar[d] \\
    \tilde{F}_G(u)\ar[r] &\tilde{F}_G(v)
    }
\]
Proceeding in the same manner with higher dimensional faces, we get a $G$-equivariant spatial refinement $\tilde{F}$ of $F$ that is free away from the basepoint. Now, following Section \ref{SectionRealizationBurnside}, the Vogt homotopy colimit $\|\tilde{F}\|$ of the homotopy coherent diagram $\tilde{F}_*$ is then a \GCW-complex and its associated spectrum $\vert F\vert = \Sigma^{-k}\Sigma^{\infty} \|\tilde{F}\|$ is a free \GCW-spectrum well-defined up to $G$-homotopy. Therefore we have:

\begin{proposition}\label{prop:quotient}
Let $F\colon \cube{n}\to \gburnside{G}$ be a cube. Then the realization $|F|$ is a locally compact \CW-spectrum with a free action of $G$ well-defined up to $G$-homotopy. 
Moreover, any natural transformation between cubes in the $G$-Burnside category induces a map between \CW-spectra well-defined up to $G$-homotopy.

Additionally, if $N\subset G$ is a normal subgroup and $F_N$ is the levelwise quotient of $F$ by $N$, then $|F_N|\simeq |F|/N$ equivariantly with respect to the action of $G/N$.
\end{proposition}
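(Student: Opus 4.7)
The plan is to reduce the proposition to the non-equivariant results by descending to the quotient cube $F/G$, which lives in the ordinary Burnside category $\smallburnside$ because $F$ is $G$-finite and free, and then lifting everything equivariantly. By Lemma \ref{lemma:forgetfulfactors}, $F$ is in particular a cube in $\lfburnside$, so Proposition \ref{lemma:lfCWspectrum} already provides a locally compact \CW-spectrum underlying $|F|$; the new content is the free $G$-action compatible with the cellular structure, and its well-definedness up to $G$-homotopy.

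First I would establish existence by picking a $k$-spatial refinement $\tilde{F}_G$ of $F/G$ for some $k>n$ via Proposition \ref{prop:2}(1), and lifting it equivariantly as indicated in the paragraph preceding the statement. The key point is that since each $F(u)$ is a free $G$-set and the box maps at the quotient level have already been fixed, every map $\tilde{F}_G(u)\to \tilde{F}_G(v)$ admits a unique equivariant refinement $F(u)_+\wedge S^k\to F(v)_+\wedge S^k$. Because the spatial data at each vertex is a wedge of spheres on which $G$ acts freely away from the basepoint, the Vogt homotopy colimit inherits a free \GCW-structure, and desuspending yields a free \GCW-spectrum that is locally compact by Proposition \ref{lemma:lfCWspectrum}.

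For well-definedness, given two equivariant $k$-spatial refinements $\tilde{F}_1,\tilde{F}_2$, I would descend them to spatial refinements of $F/G$, suspend once if necessary so that $k\geq n+1$, and invoke Proposition \ref{prop:2}(2) to obtain a homotopy of homotopy coherent diagrams between the descended refinements. This homotopy can itself be lifted uniquely and equivariantly to a $G$-homotopy between $\tilde{F}_1$ and $\tilde{F}_2$ by the same lifting mechanism used for existence. Passing to Vogt homotopy colimits and desuspensions produces the required $G$-homotopy equivalence. For a natural transformation $\eta\colon F\to F'$, viewed as a cube $\underline{\eta}\colon \cube{n+1}\to \gburnside{G}$ restricting to $F$ and $F'$ on the faces $\cube{n}\times\{1\}$ and $\cube{n}\times\{0\}$, I would extend chosen equivariant refinements of $F$ and $F'$ to an equivariant refinement of $\underline{\eta}$ by the same descend-lift recipe, obtaining a $G$-map $|F|\to |F'|$ independent of choices up to $G$-homotopy.

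Finally, for the normal-subgroup statement, the construction above applied to $F_N$ is literally the $N$-quotient of the construction for $F$: each $F_N(u)=F(u)/N$ is a free $G/N$-set, and an equivariant spatial refinement of $F_N$ is obtained as the $N$-quotient of one for $F$, both descending to the same spatial refinement of $F/G=F_N/(G/N)$. The Vogt model \eqref{hocolim} of the homotopy colimit manifestly commutes with quotients by a free cellular group action, so $\|\widetilde{F_N}\| \cong \|\tilde{F}\|/N$ as $G/N$-\CW-complexes, and desuspension preserves this identification, giving $|F_N|\simeq |F|/N$. The main obstacle throughout is the equivariant lifting step at every level of the coherent data; freeness of the $G$-action on each $F(u)$ and the unique-lifting property it provides is precisely what makes the descend-and-lift strategy propagate coherently through all higher coherences.
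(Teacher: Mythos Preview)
Your proposal is correct and follows essentially the same descend-and-lift strategy as the paper: build a spatial refinement of the quotient $F/G$ in $\smallburnside$, lift it uniquely by freeness to a $G$-equivariant refinement, and use that the Vogt homotopy colimit is compatible with free quotients for the normal-subgroup statement. You are somewhat more explicit than the paper about the well-definedness step (descending two refinements, applying Proposition~\ref{prop:2}(2), and lifting the resulting homotopy equivariantly), but the argument is the same in substance.
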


\begin{proof}
The spatial refinement $\tilde{F}_G$ constructed above is also a refinement of the composition $\cube{n}\to \gburnside{G}\to \lfburnside$, and therefore yields a locally compact spectrum.

If $\eta\colon F\rightarrow F^\prime$ is a natural transformation between cubes $F,F^\prime\colon \cube{n}\to \gburnside{G}$, it is a $(n+1)$-dimensional cube in $\gburnside{G}$ that restricts to $F$ on $\cube{n}\times\{1\}$ and $F^\prime$ on $\cube{n}\times\{0\}$. Let $\tilde{F}_G$ and $\tilde{F^\prime}_G$ be spatial refinements of the quotients $F/G$ and $F^\prime/G$. By Lemma \ref{lemma:recursiveb}, we can extend them to a spatial refinement $\tilde{\eta}_G$ of $\eta/G$, that is, a spatial refinement of $\eta$ as a cube in $\lfburnside$. We can lift $\tilde\eta_G$ to a spatial refinement of $\eta$ by the same arguments used in the discussion preceding this proposition. Then, $\eta$ induces a map $\tilde\eta\colon\tilde{F}\rightarrow\tilde{F^\prime}$ which in turn yields a map $\vert\eta\vert\colon\vert F\vert\rightarrow\vert F^\prime\vert$ well-defined up to $G$-homotopy.

For the last assertion, notice that the quotient of the $G$-equivariant spatial refinement $\tilde{F}_G$ by the subgroup $N$ is a $G/N$-equivariant spatial refinement of $F_N$. Since the action of $G$ on $\tilde{F}_G$ is free, taking quotient by $N$ commutes with the homotopy colimit used to build the realization of $F$, and therefore $|F_N|\simeq |F|/N$.
\end{proof}

\section{Duality for cubes in the Burnside category}\label{section:duality}

The Burnside category is equivalent to its opposite: reversing the role of the maps in a correspondence defines a functor $\dual\colon \smallburnside\to \smallburnside^{\op}$. This duality disappears in the locally finite Burnside category, but still exists in the bilocally finite Burnside category, since the locally finiteness assumption is imposed in both maps of each correspondence.

The same happens with the category $\bfinMod{R}$ of finite free $R$-modules with a basis: Taking duals defines a functor $\dual\colon \bfinMod{R}\to \bfinMod{R}^{\op}$ which is not the identity on objects, but the existence of a basis yields a canonical isomorphism between the finite free $R$-module $M$ and $\dual(M)$. The linearization functor $\linear\colon \smallburnside\to \bfinMod{R}$ commutes with the duality functors in the sense that there is a natural isomorphism between $\dual\circ \linear$ and $\linear\circ \dual$. 

The dual of a non-finite free $R$-module is not isomorphic to the original $R$-module. Nonetheless, this can be solved by changing the category: Define the category $\bMod{R}$ whose objects are modules endowed with a prescribed basis and morphisms $f\colon M\to N$ are homomorphisms such that for each basis element $v$ of $N$ there are finitely many basis elements $e_i$ of $M$ such that $(v^\vee)(f(e_i))\neq 0$. Define the \emph{finitely supported dual} of a module $M\in \bMod{R}$ as the subspace $\fdual(M)\subset \dual(M)$ of those functions $f\colon M\to R$ such that $f(e_i) = 0$ for all basis elements except for finitely many of them (alternatively, the subspace generated by the $e_i^\vee$).  Again, there is a canonical isomorphism between $\fdual(M)$ and $M$, and the linearization functor $\linear\colon \lfburnside \to \bMod{R}$ commutes with the finitely supported duality functor in the sense that there is a natural isomorphism between $\fdual\circ \linear$ and $\linear\circ \dual$. Additionally, the category $\bMod{R}$ has finite coproducts and the functor $\fdual$ commutes with them. Define $\bCh{R}$ as the category of chain complexes in $\bMod{R}$, and $\CWSpectra_{lc}$ as the category of \rlc \CW-spectra and proper maps.

Applying this discussion levelwise to cubes in the Burnside category, we obtain the following commutative diagram:
\[ \xymatrixcolsep{4pc}
	\xymatrix{
		\CWSpectra_{lc} \ar@/^1pc/[rrd]^{C_*} &&
        \\
		\lfburnside^{\cube{n}}\ar[u]^{|\cdot |} \ar[r]\ar@{<->}[d]_{\dual} \ar[r]^{\linear}& \Mod{R}_b^{\cube{n}} \ar@{<->}[d]^{\fdual} \ar[r]^{\Tot} & \bCh{R}\ar@{<->}[d]^{\fdual} 
        \\
		\lfburnside^{\cubeop{n}}\ar[r] & \Mod{R}_b^{\cubeop{n}} \ar[r] & \bCh{R}
	}
\]
The arrows in the lower left-hand square are compositions with the indicated functors, where we identify functors $\cubeop{n}\to \lfburnside^{\op}$ with functors $\cube{n}\to \lfburnside$, and the same with $\Mod{R}_b$. The bottom right map is defined so that the right square commutes.

%Since the totalization is constructed using finite coproducts and the finitely supported dual commutes with them, the lower right-hand square commutes.} 

The commutativity of the upper square follows by the same reasoning as in \cite{LLS20}. Let $\cD$ be the category $\cubeplus{n}$. Then $\Tot(F) \simeq \hocolim_{\cD} \linear\circ F$, and we have 
\[
	C_*|F| \simeq
	\Sigma^{-k}C_*\|\tilde{F}\| \simeq \hocolim_{\cD} \Sigma^{-k}C_*\tilde{F} \simeq \hocolim_{\cD} \linear\circ F\simeq \Tot(F). 
\]

Here is an application of this discussion to Khovanov homology that only uses the small Burnside category: Khovanov homology is the totalization of a contravariant cube $F$ of $R$-modules (middle bottom of the diagram). The work of Lawson, Lipshitz and Sarkar produces a (strictly unitary, lax) covariant cube $\cF$ in the Burnside category (left middle of the diagram) such that the cube of $R$-modules $\linear\circ\cF$ (center of the diagram) is dual to the cube of Khovanov. As a consequence, the dual of the cellular chain complex of $|F|$ is isomorphic to the Khovanov chain complex. 

When working equivariantly we have the following counterpart of the previous diagram:
\[
	\xymatrix{\xymatrixcolsep{10pc}
	G\text{-}\CWSpectra \ar@/^1pc/[rrrd]^{C_*} & & & 
        \\
		\gburnside{G}^{\cube{n}}\ar[u]^{|\cdot |} \ar[r] \ar@{<->}[d]_{\dual}   & \Mod{R[G]}^{\cube{n}} \ar@{<->}[d]^{\fdual} \ar[rr]^{\Tot} && \Ch(R[G])\ar@{<->}[d]^{\fdual} 
        \\
		\gburnside{G}^{\cubeop{n}}\ar[r] & \Mod{R[G]}^{\cubeop{n}} \ar[rr] && \Ch(R[G])
        }
\]

\section{Quantum annular homology and Burnside functor}\label{SectionQuantumAnnular}

In \cite{APS04} Asaeda, Przytycki and Sikora generalized Khovanov homology to the more general setting of links in {\it{thickened surfaces}}, that is, $[0,1]$-bundles over surfaces. In the particular case when the surface is the annulus $\mathbb{A} = S^1 \times [0,1]$, the homology is called {\it{annular Khovanov homology}}. The idea to adapt Viro’s approach in \cite{V02} to the setting of annular links (i.e., links embedded in the thickened annulus $\mathbb{A} \times [0,1]$) consists of considering diagrams as projections over the first factor $\mathbb{A}$ and noticing that after smoothing crossings to obtain Kauffman states, there are two different types of closed curves in $\mathbb{A}$: the ones bounding disks (trivial), and the ones parallel to the core of the annulus (essential). Authors manage to extend Viro’s differential to the case when essential curves are involved in the mergins and splittings. As in the classical case, annular Khovanov homology is functorial and its construction can be summarized in the annular Khovanov functor  $\mathcal{F}_\mathbb{A}\colon\cubeop{n}\rightarrow \Mod{\bZ}$. The totalization of the functor $\mathcal{F}_\mathbb{A}$ associated to a diagram $D$ is the annular Khovanov chain complex $CKh_{\mathbb{A}}(D)$.

 Some years later, Beliakova, Putyra and Wehrli  \cite{BPW19} introduced {\it{quantum annular Khovanov homology}}, a triply graded homology theory carrying an action which behaves well with the action of cobordisms. This construction can be summarized in a functor $\mathcal{F}_{\mathbb{A}_\mathfrak{q}}\colon \cubeop{n}\rightarrow \Mod{\Bbbk}$, a deformation of the annular Khovanov homology functor, where $\Bbbk=\mathbb{Z}[\mathfrak{q},\mathfrak{q}^{-1}]$ is a ring and $\mathfrak{q}\in\Bbbk$ a distinguished unit. As in the previous case, the totalization of the functor $\mathcal{F}_{\mathbb{A}_\mathfrak{q}}$  associated to a diagram $D$ is the quantum annular Khovanov chain complex $CKh_{\mathbb{A}_\mathfrak{q}}(D)$.

\subsection{The quantum annular Burnside functor}

In \cite{AKW21} the authors consider the finite cyclic group $G_r = \langle \mathfrak{q} \, | \, \mathfrak{q}^r = 1\rangle$ with $r\geq 1$, and construct the \emph{quantum annular Burnside functor} $F_\mathfrak{q}^r\colon\cube{n}\rightarrow\gburnside{G_r}$. The quotient $F^1_{\mathfrak{q}} = F_{\mathfrak{q}}^r/G_r$ is precisely the classical annular Khovanov Burnside functor from \cite[Section~4.3]{AKW21}, whose associated spectrum recovers the annular Khovanov homology of \cite{APS04}.

If one considers the infinite cyclic group $G = \langle \mathfrak{q}\rangle$ instead, their construction yields a $G$-equivariant cube $F_\mathfrak{q}\colon\cube{n}\rightarrow\hugeburnside$ in the big Burnside category, since the condition $\mathfrak{q}^r=1$ is not used in their argument. The quotient of this cube by the action of $G$ is $F_{\mathfrak{q}}^1$, which is a cube in the finite Burnside category. Therefore the cube $F_{\mathfrak{q}}$ defines a functor $F_{\mathfrak{q}}\colon \cube{n}\to \gburnside{G}$.

Denote the locally compact \GCW-spectrum $|F_{\mathfrak{q}}|$ obtained from a link diagram $D$ using Proposition \ref{prop:quotient} by $\mathcal{X}_{\mathbb{A}_\mathfrak{q}}(D)$. 

\begin{theorem}
The proper homotopy type of the locally compact \GCW-spectrum $\mathcal{X}_{\mathbb{A}_\mathfrak{q}}(D)$ is a link invariant. 
\end{theorem}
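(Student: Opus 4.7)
The plan is to verify invariance under each of the three Reidemeister moves. Given diagrams $D$ and $D'$ related by a single Reidemeister move, I would produce a proper stable homotopy equivalence $\mathcal{X}_{\mathbb{A}_\mathfrak{q}}(D)\simeq \mathcal{X}_{\mathbb{A}_\mathfrak{q}}(D')$ by lifting the invariance proof of Akhmechet--Krushkal--Willis for the finite cyclic quotients $F_\mathfrak{q}^r$ to the universal $G$-equivariant cube $F_\mathfrak{q}$, and then appealing to the constructions of Section~\ref{ss:3.4}.

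Concretely, for each Reidemeister move AKW construct, at the level of the $G_r$-equivariant Burnside category, a zigzag of natural transformations of cubes
\[
    F_\mathfrak{q}^r(D)\longleftrightarrow \cdots \longleftrightarrow F_\mathfrak{q}^r(D')
\]
(after possibly stabilizing by contractible summands), each of which induces an equivalence upon realization via an equivariant Gauss elimination / face cancellation lemma. The key observation is that none of these constructions make use of the relation $\mathfrak{q}^r=1$: at every step the correspondences and fibrewise bijections are already well-defined on free $G$-sets for the infinite cyclic group $G=\langle \mathfrak{q}\rangle$, and the relation is only imposed when one takes the quotient by the subgroup $r\mathbb{Z}\subset G$ to obtain $F_\mathfrak{q}^r$. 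Consequently, going through their proof move by move, each AKW invariance zigzag can be reinterpreted verbatim as a zigzag of natural transformations in $\gburnside{G}^{\cube{n}}$ connecting $F_\mathfrak{q}(D)$ and $F_\mathfrak{q}(D')$.

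Next, I would apply Theorem \ref{thm:G} to pass to realizations: each natural transformation of cubes in $\gburnside{G}$ induces a $G$-equivariant map of free \groupCW{G}-spectra well-defined up to $G$-homotopy. The auxiliary cancellation and stabilization statements needed to promote these maps to equivalences are universal statements about cubes in $\gburnside{G}$. They follow from the corresponding statements for the small Burnside category \cite{LLS20} applied to the levelwise quotients, combined with the last part of Proposition~\ref{prop:quotient}, which identifies $|F_N|$ with $|F|/N$ for a normal subgroup $N\subset G$; since the action on the spatial refinements of Section~\ref{ss:3.4} is free away from the basepoint, contractibility of the quotient realization lifts to $G$-contractibility of the source.

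Finally, properness comes for free once equivariance is established. Since $\mathcal{X}_{\mathbb{A}_\mathfrak{q}}(D)$ is a $G$-finite free \groupCW{G}-spectrum (as in Theorem~\ref{thm:introkhov}, since the underlying cube has $G$-finite vertices), any $G$-equivariant cellular map between two such spectra pulls each cell of the target back to a finite union of cells; such a map is therefore automatically proper, and analogously for the $G$-homotopies. The main obstacle is the first step: it is the careful bookkeeping required to verify, case by case in AKW, that every correspondence, fibrewise bijection, and intermediate cube used in their proof of Reidemeister invariance is already defined over free $G$-sets before quotienting by $r\mathbb{Z}$, so that the whole invariance argument can be carried out inside $\gburnside{G}$.
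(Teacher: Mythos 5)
Your overall strategy coincides with the paper's: the paper's proof is a two-line reduction to \cite[Theorem~5.11]{AKW21}, observing that the AKW argument never uses the relation $\mathfrak{q}^r=1$ and that the only place where finiteness of the acting group enters can be repaired. So the first and last parts of your proposal (re-running the AKW zigzags inside $\gburnside{G}^{\cube{n}}$ for $G=\langle\mathfrak{q}\rangle$, and getting properness for free from $G$-finiteness of the spectra) are exactly the intended argument.

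However, the mechanism you propose for promoting the zigzag maps to equivariant equivalences has a genuine gap. You argue that because the action is free away from the basepoint, ``contractibility of the quotient realization lifts to $G$-contractibility of the source,'' i.e.\ you want to deduce equivariant statements about $|F|$ from non-equivariant statements about $|F|/G=|F/G|$. This implication is false for a free action of an infinite (indeed, of any nontrivial) group. The cellular chain complex of the quotient is $C_*(|F|)\otimes_{\bZ[G]}\bZ$, and a bounded complex of free $\bZ[G]$-modules can become acyclic after this base change without being acyclic: for $G=\bZ$ the complex $\bZ[t,t^{-1}]\xrightarrow{\,2-t\,}\bZ[t,t^{-1}]$ has augmentation an isomorphism but $H_0=\bZ[t,t^{-1}]/(2-t)\cong\bZ[1/2]\neq 0$, and it is realized by a two-cell free \groupCW{\bZ}-spectrum with contractible quotient that is not contractible. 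The correct input is not an equivalence of quotients but an equivalence of \emph{underlying} spectra: Proposition \ref{prop:A-G-CW} (the stable Whitehead-type statement that an equivariant map of free \GCW-spectra is a $G$-homotopy equivalence if and only if it is a non-equivariant homotopy equivalence) is precisely the ingredient the paper invokes to replace the finite-group hypothesis in AKW, and the underlying non-equivariant equivalences are supplied by the explicit (non-equivariant) contractions in the LLS/AKW cancellation lemmas rather than by passing through $|F|/G$. With that substitution your argument goes through; as written, the quotient-lifting step does not.
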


\begin{proof}
In \cite[Theorem~5.11]{AKW21} this theorem is proven for the functors $F_{\mathfrak{q}}^r$. Although they require the acting group to be finite, their argument applies in our setting, by Proposition \ref{prop:A-G-CW}.
\end{proof}

\begin{theorem}
If $D$ is a link diagram, there is an isomorphism of cochain complexes 
$$C_c^{\ast,cell}(\mathcal{X}_{\mathbb{A}_\mathfrak{q}}(D)) \cong \CKhannular(D).$$ In particular  $$H_c^\ast(\mathcal{X}_{\mathbb{A}_\mathfrak{q}}(D))\cong H_\ast(\CKhannular(D)).$$
\end{theorem}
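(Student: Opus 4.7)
The plan is to run the isomorphism straight through the commutative diagram at the end of Section \ref{section:duality}, specialized to the equivariant cube $F_{\mathfrak{q}} \colon \cube{n} \to \gburnside{G}$ associated to the diagram $D$. First, I would invoke Proposition \ref{prop:quotient} to record that $\mathcal{X}_{\mathbb{A}_\mathfrak{q}}(D) = |F_{\mathfrak{q}}|$ is a locally compact (free) \groupCW{G}-spectrum and is in particular \rlc. This places us inside the left column of the equivariant diagram, so the cellular chain functor $C_*$ is well defined and, by the commutativity established after the non-equivariant version of the diagram (the computation $C_*|F| \simeq \hocolim_\cD \linear \circ F \simeq \Tot(\linear \circ F)$), we obtain a natural isomorphism $C_*^{cell}(\mathcal{X}_{\mathbb{A}_\mathfrak{q}}(D)) \cong \Tot(\linear \circ F_{\mathfrak{q}})$ of chain complexes of $R[G]$-modules with $R = \bZ$ and $G = \langle \mathfrak{q}\rangle$, so that $\linear \circ F_{\mathfrak{q}}$ lives in $\bMod{\Bbbk}$ with $\Bbbk = \bZ[\mathfrak{q},\mathfrak{q}^{-1}]$.

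Next I would pass to compactly supported cochains via the finitely supported duality $\fdual$ of Section \ref{section:duality}. For an \rlc \CW-spectrum, compactly supported cellular cochains coincide with $\fdual$ of cellular chains on the basis of cells, so $C_c^{*,cell}(\mathcal{X}_{\mathbb{A}_\mathfrak{q}}(D)) \cong \fdual(C_*^{cell}(\mathcal{X}_{\mathbb{A}_\mathfrak{q}}(D)))$. Combining with the previous step and the commutativity of the right square of the equivariant diagram (the natural isomorphism $\fdual \circ \Tot \cong \Tot \circ \fdual$, together with $\fdual \circ \linear \cong \linear \circ \dual$) yields
\[
C_c^{*,cell}(\mathcal{X}_{\mathbb{A}_\mathfrak{q}}(D)) \cong \fdual\bigl(\Tot(\linear \circ F_{\mathfrak{q}})\bigr) \cong \Tot\bigl(\linear \circ \dual \circ F_{\mathfrak{q}}\bigr),
\]
the last being a cochain complex of $\Bbbk[G] = \Bbbk$-modules.

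It remains to identify $\Tot(\linear \circ \dual \circ F_{\mathfrak{q}})$ with $\CKhannular(D)$. Here I would appeal to the construction of Akhmechet--Krushkal--Willis: by design, $F_{\mathfrak{q}}^r$ is the Burnside lift of the quantum annular Khovanov functor $\mathcal{F}_{\mathbb{A}_{\mathfrak{q}}}$ truncated at $\mathfrak{q}^r = 1$, and in our setting (where we drop the truncation, as noted just after the definition of $F_{\mathfrak{q}}$) the cube $\linear \circ \dual \circ F_{\mathfrak{q}}$ is precisely $\mathcal{F}_{\mathbb{A}_{\mathfrak{q}}}$ regarded as a contravariant cube of $\Bbbk$-modules; this is a matter of unpacking \cite[\S4]{AKW21} and checking that the equivariant linearization indeed produces the Beliakova--Putyra--Wehrli differential. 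Taking totalizations then gives $\Tot(\linear \circ \dual \circ F_{\mathfrak{q}}) \cong \CKhannular(D)$ as cochain complexes of $\Bbbk$-modules, and the second statement follows by taking cohomology.

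The main obstacle is the last identification: one must verify that, with the $G = \langle \mathfrak{q}\rangle$-action in place, the linearized Burnside cube $\linear \circ F_{\mathfrak{q}}$ reproduces, after finitely supported dualization, exactly the structure maps of $\mathcal{F}_{\mathbb{A}_{\mathfrak{q}}}$ (including the $\mathfrak{q}$-twisted signs entering the ladybug-type edge assignments). Once this is in place—arguing orbit by orbit and comparing with the $r$-truncated statement already proved in \cite{AKW21}, whose argument does not use $\mathfrak{q}^r=1$ at the level of a single orbit—the rest of the proof is a formal diagram chase through Section \ref{section:duality}.
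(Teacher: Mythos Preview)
Your proposal is correct and follows essentially the same route as the paper: invoke the duality diagram of Section~\ref{section:duality} to identify $C_c^{*,cell}(|F_{\mathfrak{q}}|)$ with the finitely supported dual of $\Tot(\linear\circ F_{\mathfrak{q}})$, then recognize this as $\CKhannular(D)$ by construction of $F_{\mathfrak{q}}$ from the Beliakova--Putyra--Wehrli functor. The paper's proof compresses all of this into a two-line appeal to Section~\ref{section:duality}; your version unpacks the individual squares of the diagram and is explicit about the one nontrivial input---that $\linear\circ\dual\circ F_{\mathfrak{q}}\cong\mathcal{F}_{\mathbb{A}_{\mathfrak{q}}}$---which the paper treats as part of the definition of $F_{\mathfrak{q}}$.
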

\begin{proof}
 By the discussion in Section \ref{section:duality}, the cochain complex of $\|F_{\mathfrak{q}}\|$ with compact support is isomorphic to the dual of the totalization $\Tot(F_\mathfrak{q})$ (up to a shift), so it is isomorphic to the quantum annular Khovanov complex $CKh_{\mathbb{A}_\mathfrak{q}}(D)$.
\end{proof}

The following result is a consequence of the second part of Proposition \ref{prop:quotient}. 

\begin{corollary}
    The quotient of the spectrum $\mathcal{X}_{\mathbb{A}_\mathfrak{q}}(D)$ by the normal subgroup $r\bZ\subset \bZ$ yields the spectrum $\mathcal{X}^r_{\mathbb{A}_\mathfrak{q}}(D)$ of \cite{AKW21}.
\end{corollary}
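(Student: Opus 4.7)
The plan is to directly invoke the final clause of Proposition \ref{prop:quotient} applied to the group $G = \bZ$, the cube $F_{\mathfrak{q}}\colon \cube{n}\to \gburnside{\bZ}$ built from the link diagram $D$ in Section \ref{SectionQuantumAnnular}, and the normal subgroup $N = r\bZ \subset \bZ$. That proposition gives an equivariant equivalence $|(F_{\mathfrak{q}})_{r\bZ}|\simeq |F_{\mathfrak{q}}|/r\bZ$ with respect to the residual action of $\bZ/r\bZ = G_r$. Since by definition $|F_{\mathfrak{q}}| = \mathcal{X}_{\mathbb{A}_\mathfrak{q}}(D)$, the right-hand side is exactly $\mathcal{X}_{\mathbb{A}_\mathfrak{q}}(D)/r\bZ$, so everything reduces to identifying the levelwise quotient $(F_{\mathfrak{q}})_{r\bZ}$ with the quantum annular Burnside functor $F_{\mathfrak{q}}^r\colon \cube{n}\to \gburnside{G_r}$ of Akhmechet--Krushkal--Willis.

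For this identification I would revisit the construction of $F_{\mathfrak{q}}$ versus $F_{\mathfrak{q}}^r$ in \cite{AKW21}. The key observation, already recorded right before the statement of the corollary, is that the construction in \cite{AKW21} never uses the relation $\mathfrak{q}^r = 1$; it only uses that $\mathfrak{q}$ acts freely on the generators associated to each Kauffman state. Therefore, the \textit{same} generators and correspondences yield a cube over $\gburnside{\bZ}$, namely $F_{\mathfrak{q}}$, and $F_{\mathfrak{q}}^r$ is obtained by imposing $\mathfrak{q}^r = 1$, i.e., by quotienting each vertex set and each correspondence set by the diagonal action of $r\bZ \subset \bZ$. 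Since taking the quotient by a normal subgroup commutes with fiber products (when acting freely) and with the $2$-morphism data, this quotient agrees with the levelwise quotient $(F_{\mathfrak{q}})_{r\bZ}$ as a cube in $\gburnside{\bZ/r\bZ} = \gburnside{G_r}$.

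Putting these together,
\[
    \mathcal{X}_{\mathbb{A}_\mathfrak{q}}(D)/r\bZ \; = \; |F_{\mathfrak{q}}|/r\bZ \; \simeq \; |(F_{\mathfrak{q}})_{r\bZ}| \; = \; |F_{\mathfrak{q}}^r| \; = \; \mathcal{X}^r_{\mathbb{A}_\mathfrak{q}}(D),
\]
as $G_r$-equivariant spectra, which is the claimed identification.

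The main (and essentially only) obstacle is the bookkeeping step of verifying that the cube $F_{\mathfrak{q}}^r$ in \cite{AKW21} agrees, on the nose, with the levelwise $r\bZ$-quotient of our $F_{\mathfrak{q}}$: one must check that the generating $\bZ$-sets at each vertex and the source/target data of the correspondences in both constructions match once $\mathfrak{q}^r$ is set equal to $1$. This is a direct comparison of combinatorial data rather than a homotopical argument, and it goes through because both constructions are produced by the identical recipe applied to the two cyclic groups $\bZ$ and $\bZ/r\bZ$; the conclusion of the corollary then follows formally from Proposition \ref{prop:quotient}.
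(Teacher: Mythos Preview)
Your proof is correct and follows the same approach as the paper, which simply states that the corollary is a consequence of the second part of Proposition~\ref{prop:quotient}. You have spelled out in more detail the identification $(F_{\mathfrak{q}})_{r\bZ} = F_{\mathfrak{q}}^r$, which the paper leaves implicit in the discussion preceding the corollary; otherwise the arguments are identical.
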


After this corollary one can take the spectrum $\mathcal{X}_{\mathbb{A}_\mathfrak{q}}(D)/r\bZ$ as the definition of $\mathcal{X}^r_{\mathbb{A}_\mathfrak{q}}(D)$. This viewpoint avoids the need to treat the cases $r=1,2$ separately as in \cite[Section 4.3]{AKW21}.

\appendix
\section{Locally compact spectra}

In this appendix we introduce the category of locally compact \CW-spectra. From the definition, it will follow that there is a functor from this category to the category of locally finite chain complexes, and taking the cohomology of the finitely supported duals defines the \emph{compactly supported cohomology} of a locally compact \CW-spectrum. We then relate them to free \GCW-spectra and give a criterion for a map to be a $G$-homotopy equivalence.

\subsection{Relatively locally compact pointed CW-complexes} 
A continuous map $f\colon X\to Y$ between topological spaces is \emph{proper} if the preimage of each compact subset is compact. If $A\subset Y$ is a subspace, then $f$ is \emph{proper relative to $A$} if for every compact subspace $K\subset Y/A$, the quotient $f^{-1}(K)/f^{-1}(A\cap K)$ is compact. We will only consider the case when $A$ is a point.
\begin{definition}
 A \emph{relatively locally compact} \CW-complex (abbreviated as \rlc) is a pointed \CW-complex such that the union of the attaching maps of any given dimension is a proper map relative to the basepoint.
\end{definition} 

Equivalently, a pointed \CW-complex is \rlc if any open cell different from the basepoint intersects only finitely many cells of any given dimension. A \CW-complex is \emph{reduced} if it has a single $0$-cell.

\begin{example}
The reduced suspension of a locally compact pointed \CW-complex is a \rlc \CW-complex.
\end{example}

\begin{example}
The quotient $X/T$ of a $T$-complex by its tree $T$ \cite{Baues-Quintero, Muro} is a \rlc \CW-complex.
\end{example}

\begin{definition}
A cellular map between \rlc \CW-complexes is \emph{proper} if it is proper relative to the basepoint. 
\end{definition}

Equivalently, a cellular map $f$ is proper if the preimage of each open cell different from the basepoint intersects only finitely many cells of each dimension. 

\begin{example}
    The reduced suspension of a proper map between locally compact \CW-complexes is a proper map between \rlc \CW-complexes.
\end{example}

\begin{definition}
The \emph{compactly supported cochain complex} of a \rlc \CW-complex $X$ is the subcomplex $C^*_{c}(X,*)\subset C^*(X,*)$ of compactly supported cochains relative to the basepoint. The \emph{compactly supported cohomology} of $X$ is the cohomology $H^*(X,*)$ of this cochain complex.
\end{definition}

The relative chain complex $C_*(X,*)$ of a \rlc \CW-complex is a locally finite chain complex, and its finitely supported dual is precisely $C^*_{c}(X,*)\subset C^*(X,*)$. Proper maps induce homomorphisms between compactly supported cohomology groups. Properly homotopic proper maps induce the same homomorphism between compactly supported cohomology groups.

Let $G$ be a countable group. A free pointed \GCW-complex is a pointed \CW-complex $X$ with a cellular action of $G$ that fixes the basepoint and is free away from it. The quotient $X/G$ is a pointed \CW-complex, and we say that $X$ is $G$-compact if the quotient is compact. If that is the case, then $X$ is a \rlc \CW-complex. An equivariant map $f\colon X\to Y$ between $G$-finite free pointed \CW-complexes is always proper. An equivariant map $f\colon X\to Y$ is a \emph{$G$-homotopy equivalence} if there is an equivariant map $g\colon Y\to X$ such that $f\circ g$ and $g\circ f$ are equivariantly homotopic to the identity. In that case, $f$ is also a proper homotopy equivalence. The following proposition is a consequence of Theorem 5.3 in \cite{M71}. 

\begin{proposition}\label{prop:B-G-CW}
    Let $f\colon X\to Y$ be an equivariant map between finite dimensional free pointed \GCW-complexes. Then $f$ is a $G$-homotopy equivalence if and only if $f$ is a homotopy equivalence.
\end{proposition}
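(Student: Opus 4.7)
The ``only if'' direction is immediate: any $G$-homotopy inverse becomes a non-equivariant homotopy inverse upon forgetting the $G$-action.

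For the converse, my plan is to invoke the equivariant Whitehead theorem of Matumoto (referenced as \cite[Thm.~5.3]{M71}). The form of this theorem we need says that an equivariant map $f\colon X\to Y$ between $G$-CW complexes is a $G$-homotopy equivalence provided the induced map on fixed-point subspaces $f^H\colon X^H\to Y^H$ is a weak homotopy equivalence for every subgroup $H\leq G$. Once these fixed-point hypotheses are verified, Matumoto's result hands us the conclusion directly.

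The key point is that the freeness of the action trivializes all the fixed-point conditions except the one for the trivial subgroup. Indeed, since $G$ acts freely on $X\setminus\{*\}$ and on $Y\setminus\{*\}$, every non-trivial subgroup $H$ fixes only the basepoint, so $X^H = Y^H = \{*\}$ and $f^H$ is the identity on a point, which is trivially a weak equivalence. For the trivial subgroup, $f^{\{e\}}=f$ is by hypothesis a homotopy equivalence, which is in particular a weak equivalence. So the hypotheses of Matumoto's theorem are all in place and the conclusion follows.

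The main subtlety to handle is verifying that our setup (pointed free \GCW-complexes, possibly not $G$-finite, but finite dimensional) falls within the scope of Matumoto's theorem. Finite-dimensionality is what makes the equivariant cell-by-cell induction behind Matumoto's argument work without extra hypotheses, and is also what allows us to avoid needing to know in advance that $f$ is a weak equivalence rather than merely a homotopy equivalence (the two coincide here by the ordinary Whitehead theorem). I do not expect any real obstacle beyond quoting the reference carefully in this pointed, free, finite-dimensional setting.
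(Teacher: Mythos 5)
Your proof is correct and follows the same route as the paper, which simply cites Matumoto's Theorem 5.3 (the equivariant Whitehead theorem) for this statement; your verification that freeness of the action reduces all fixed-point conditions to the trivial-subgroup case is exactly the intended argument, spelled out in more detail than the paper provides.
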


\subsection{Locally compact spectra}

Recall that a \CW-spectrum is a sequence of pointed \CW-complexes $\{E_n\}_{n\geq 0}$ together with structural maps $\varepsilon_n\colon \Sigma E_n\to E_{n+1}$ that include $\Sigma E_n$ as a subcomplex of $E_{n+1}$. 
\begin{definition}
    A finite dimensional \CW-spectrum $\bE$ is \emph{locally compact} if every \CW-complex $E_n$ is \rlc and the structural maps $\varepsilon_n$ are proper and eventually homeomorphisms.
\end{definition}

Proper functions and proper maps of finite dimensional locally compact \CW-spectra are defined as expected.

\begin{remark}\label{remark_A1}
    The suspension spectrum of a finite dimensional \rlc \CW-complex is locally finite. If $f\colon X\to Y$ is a proper map of finite dimensional locally finite \CW-complexes, then $\Sigma^{\infty} f$ is a proper function of finite dimensional locally finite \CW-spectra.
\end{remark}
The structural maps of a \CW-spectrum $\bE = \{E_n,\varepsilon_n\}$ induce chain homomorphisms $(\varepsilon_n)_\ast\colon C_\ast(E_n,*)\rightarrow \Sigma^{-1}C_\ast(E_{n+1},*)$. The \emph{chain complex of the \CW-spectrum $\bE = \{E_n\}_{n\geq 0}$} is the colimit
$$
    C_\ast(\bE)=\colim_{n} \Sigma^{-n}C_{\ast}(E_n,*).
$$
The chain complex of a locally compact \CW-spectrum is locally finite, so it has a well-defined finitely-supported dual, that will be called \emph{the compactly supported cochain complex of $\bE$.} The \emph{compactly supported cohomology $H^*_{c}(\bE;R)$ of a spectrum $\bE$} is the cohomology of the cochain complex $C^*_{c}(\bE)$.

A proper map of locally compact \CW-spectra induces a map between their compactly supported cochain complexes. A proper homotopy between two maps of locally compact \CW-spectra induces a homotopy between their induced maps on compactly supported cochain complexes.

A \emph{free \GCW-spectrum} is a \CW-spectrum $\bE = \{E_n,\varepsilon_n\}$ such that every \CW-complex $E_n$ is a free pointed \GCW-complex and the structure maps $\varepsilon_n$ are equivariant. The quotient $\bE/G$ is a \CW-spectrum, and we say that $\bE$ is \emph{$G$-finite} if the quotient is a finite \CW-spectrum. If that is the case and $\bE$ is finite-dimensional, then $\bE$ is a locally compact \CW-spectrum. An equivariant map between finite dimensional free $G$-finite \CW-spectra is always proper. An equivariant map $f\colon \bE\to \bE'$ is a \emph{$G$-homotopy equivalence} if there is an equivariant map $g\colon \bE'\to \bE$ such that $f\circ g$ and $g\circ f$ are equivariantly homotopic to the identity. The following is the stable version of Proposition \ref{prop:B-G-CW}:

\begin{proposition}\label{prop:A-G-CW}
    An equivariant map $f\colon \bE\to \bE'$ of free $G$-\CW-spectra is a $G$-homotopy equivalence if and only if it is a homotopy equivalence.
\end{proposition}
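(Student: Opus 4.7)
The ``only if'' direction is immediate from the definitions. For the ``if'' direction, assume $f\colon\bE\to\bE'$ is a non-equivariant homotopy equivalence; my plan is to reduce to the unstable Proposition \ref{prop:B-G-CW} via a cofiber-sequence argument. Replacing $\bE'$ by the equivariant mapping cylinder of $f$ (which carries a free $G$-action since the action at each level is free off the basepoint), we may assume that $f$ is the inclusion of $\bE$ as an equivariant subspectrum of $\bE'$. The cofiber $\bE'/\bE$ is then itself a free $G$-CW-spectrum, and it is contractible as an ordinary CW-spectrum precisely because $f$ was a homotopy equivalence. It therefore suffices to prove the following claim: every contractible free $G$-CW-spectrum $\bF$ is $G$-equivariantly contractible. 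Granting the claim, the equivariant cofiber-sequence criterion shows that the inclusion $\bE \hookrightarrow \bE'$ admits a $G$-equivariant homotopy inverse.

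To prove the claim, I would assemble the $G$-null-homotopy from level-wise data. Writing $\bF=\{F_n,\varepsilon_n\}$, the vanishing of the stable homotopy groups $\pi_k(\bF)=\colim_n\pi_{k+n}(F_n)$ together with the Freudenthal suspension theorem implies that for every fixed $k$ the space $F_n$ is sufficiently highly connected once $n$ is large. Applying Proposition \ref{prop:B-G-CW} to finite-dimensional skeleta of each $F_n$, which are free pointed $G$-CW-complexes, produces $G$-equivariant null-homotopies in a range of dimensions that grows with $n$. These are then to be spliced together using the structural inclusions $\Sigma F_n \hookrightarrow F_{n+1}$ and the equivariant homotopy extension property for free $G$-CW-pairs (itself a relative form of Proposition \ref{prop:B-G-CW}) to produce a coherent $G$-null-homotopy of the entire spectrum.

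The main obstacle is this coherence step: the level-wise $G$-equivariant null-homotopies coming from Proposition \ref{prop:B-G-CW} must be chosen compatibly with the structure maps $\varepsilon_n$ so as to assemble into an actual map of spectra, not merely a level-wise family. This is precisely the content of the stable equivariant Whitehead theorem in the free case; one may either invoke the version proved in Lewis--May--Steinberger, or carry out the induction explicitly by extending the previously chosen $G$-null-homotopy at each new level via the homotopy extension property before applying Proposition \ref{prop:B-G-CW} to the next stratum.
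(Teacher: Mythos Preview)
The paper does not actually prove this proposition: it is stated in the appendix as ``the stable version of Proposition~\ref{prop:B-G-CW}'' and left without argument, so there is no paper proof to compare against. What you have written is therefore an attempt to supply a proof where the authors simply cite folklore.

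Your reduction via the equivariant mapping cylinder and cofiber to the claim ``a non-equivariantly contractible free $G$-CW-spectrum is $G$-contractible'' is correct and is the standard shape of such arguments. The issue is with your proof of the claim. You end by saying that the coherence step ``is precisely the content of the stable equivariant Whitehead theorem in the free case'' and that one may invoke Lewis--May--Steinberger. But the proposition you are proving \emph{is} the stable equivariant Whitehead theorem in the free case, so invoking it is circular; and if you are willing to cite Lewis--May--Steinberger, the whole cofiber/level-wise argument is superfluous, since their result gives the proposition directly.

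The alternative you mention---carrying out the induction explicitly---is the honest route, but the sketch you give (Freudenthal to get high connectivity of $F_n$, then Proposition~\ref{prop:B-G-CW} on finite skeleta, then splice) is more delicate than necessary and you do not actually execute it. The clean argument bypasses the level spaces entirely: induct over the free $G$-cells of the spectrum $\bF$. Extending a $G$-null-homotopy from the $k$-skeleton over a free cell $G_+\wedge e^{k+1}$ is, by freeness, the same as extending a non-equivariant map over an ordinary cell, and the obstruction lies in $\pi_k(\bF)=0$. This avoids both the Freudenthal bookkeeping and the finite-dimensionality hypothesis in Proposition~\ref{prop:B-G-CW}, which your level-wise approach would otherwise have to confront.
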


\bibliographystyle{abbrv}
\bibliography{aux/Bibliography.bib}

\begin{thebibliography}{10}

\bibitem{AKW21}
R.~Akhmechet, V.~Krushkal, and M.~Willis.
\newblock Stable homotopy refinement of quantum annular homology.
\newblock {\em Compositio Mathematica}, 157(4):710--769, 2021.

\bibitem{APS04}
M.~M. Asaeda, J.~H. Przytycki, and A.~S. Sikora.
\newblock Categorification of the {K}auffman bracket skein module of
  {I}--bundles over surfaces.
\newblock {\em Algebraic \& Geometric Topology}, 4(2):1177--1210, 2004.

\bibitem{Baues-Quintero}
H.-J. Baues and A.~Quintero.
\newblock {\em Infinite homotopy theory}, volume~6 of {\em K-Monogr. Math.}
\newblock Dordrecht: Kluwer Academic Publishers, 2001.

\bibitem{BPW19}
A.~Beliakova, K.~K. Putyra, and S.~M. Wehrli.
\newblock Quantum link homology via trace functor {I}.
\newblock {\em Inventiones mathematicae}, 215:383--492, 2019.

\bibitem{HP89}
J.~Hoste and J.~H. Przytycki.
\newblock An invariant of dichromatic links.
\newblock {\em Proceedings of the American Mathematical Society},
  105(4):1003--1007, 1989.

\bibitem{K00}
M.~Khovanov.
\newblock A categorification of the {J}ones polynomial.
\newblock {\em Duke Mathematical Journal}, 101(3):359--426, 2000.

\bibitem{LLS17}
T.~Lawson, R.~Lipshitz, and S.~Sarkar.
\newblock The cube and the {B}urnside category.
\newblock {\em Categorification in geometry, topology, and physics},
  684:63--85, 2017.

\bibitem{LLS20}
T.~Lawson, R.~Lipshitz, and S.~Sarkar.
\newblock Khovanov homotopy type, {B}urnside category and products.
\newblock {\em Geometry \& Topology}, 24(2):623--745, 2020.

\bibitem{LS14}
R.~Lipshitz and S.~Sarkar.
\newblock A {K}hovanov stable homotopy type.
\newblock {\em Journal of the American Mathematical Society}, 27(4):983--1042,
  2014.

\bibitem{LS17}
R.~Lipshitz and S.~Sarkar.
\newblock Spatial refinements and {K}hovanov homology.
\newblock {\em Proceedings of the International Congress of Mathematicians--Rio
  de Janeiro 2018}, 2:1153--1173, 2018.

\bibitem{M71}
T.~Matumoto.
\newblock On {$G$}-{$CW$} complexes and a theorem of {JHC} {W}hitehead.
\newblock {\em J. Fac. Sci. Univ. Tokyo Sect. IA Math}, 18:363--374, 1971.

\bibitem{Muro}
F.~Muro.
\newblock On the proper homotopy type of locally compact a2n-polyhedra.
\newblock \url{https://arxiv.org/abs/math/0605213}, 2006.

\bibitem{V02}
O.~Viro.
\newblock Remarks on definition of {K}hovanov homology.
\newblock {\em arXiv preprint math/0202199}, 2002.

\bibitem{V73}
R.~M. Vogt.
\newblock Homotopy limits and colimits.
\newblock {\em Mathematische Zeitschrift}, 134:11--52, 1973.

\end{thebibliography}

\end{document}